\theoremstyle{plain}
\newtheorem{theorem}{Theorem}[section]
\newtheorem{proposition}[theorem]{Proposition}
\newtheorem{corollary}[theorem]{Corollary}
\theoremstyle{definition}
\newtheorem{definition}[theorem]{Definition}
\newtheorem{example}{Example}[section]
\theoremstyle{remark}
\newtheorem{remark}[theorem]{Remark}
\numberwithin{equation}{section}
\newcommand{\N}{\mathbb{N}}
\newcommand{\R}{\mathbb{R}}
\newcommand{\wto}{\rightharpoonup}
\tikzset{
    symbol/.style={
        draw=none,
        every to/.append style={
            edge node={node [sloped, allow upside down, auto=false]{$#1$}}}
    }
}
\title{\textbf{Quasilinear Equations with Neumann Boundary Conditions}}
\author{
\textsc{Annamaria Canino}\ (\textit{corresponding author})\\
\small Dipartimento di Matematica e Informatica, Università della Calabria\\
\small Ponte Pietro Bucci, cubo 31B, 87036 Arcavacata di Rende, Cosenza, Italy\\
\small Email: \texttt{annamaria.canino@unical.it}
\and
\textsc{Simone Mauro}\\
\small Dipartimento di Matematica e Informatica, Università della Calabria\\
\small Ponte Pietro Bucci, cubo 31B, 87036 Arcavacata di Rende, Cosenza, Italy\\
\small Email: \texttt{simone.mauro@unical.it}
}
\date{\today}
\begin{document}

\maketitle

\begin{abstract}
We prove a multiplicity result for non-constant weak solutions $u \in H^1(\Omega)$ for the quasilinear elliptic equation
\[
\begin{cases}
-\mathrm{div}(A(x,u)\nabla u) + \dfrac{1}{2} D_s A(x,u)\,\nabla u \cdot \nabla u = g(x,u) - \lambda u & \text{in } \Omega,\\[4pt]
A(x,u)\nabla u \cdot \eta = 0 & \text{on } \partial \Omega,
\end{cases}
\]
where $\lambda \in \mathbb{R}$, $\Omega$ is a bounded Lipschitz domain, $\eta$ is the outward normal to $\partial\Omega$, and $g(x,u)$ is a Carathéodory function satisfying a general subcritical and superlinear growth condition. 
We also prove that any weak solution is bounded under a stronger growth assumption.
\end{abstract}

\noindent\textbf{Keywords:} Subcritical nonlinearities, gradient elliptic systems, Neumann boundary conditions, quasilinear elliptic equations, nonsmooth critical point theory.

\medskip
\noindent\textbf{2020 MSC:} 35A01, 35A15, 35J05, 35J20, 35J25, 35J62.

\section{Introduction}

Let $N\ge2$, let \( \Omega \subset \mathbb{R}^N \) be a bounded domain with a Lipschitz boundary and let $p\in(2,2^*)$ where
\[
2^*:=
\begin{cases}
    \frac{2N}{N-2}&N\ge3\\
    \infty&N=1,2
\end{cases}
\]
is the critical Sobolev exponent. We denote with $(2^*)'$ the conjugate exponent of $2^*$, which satisfies $\frac{1}{2^*}+\frac{1}{(2^*)'}=1$.
We consider the following elliptic problem:

\[\tag{$\mathcal P$}\label{P}
\begin{cases}
    -\text{div}(A(x,u)\nabla u)+\frac12 D_sA(x,u)\nabla u\cdot\nabla u=g(x,u)-\lambda u&in\ \Omega\\
    A(x,u)\nabla u\cdot\eta=0&on\ \partial\Omega,
\end{cases}
\]
where \( \eta \) is the outward normal at the boundary \( \partial \Omega \) and $\lambda\in\R$.


We assume that $A(x,s)$ is a symmetric matrix $N\times N$ with coefficients $a_{ij}:\Omega\times\R\to\R$ such that $a_{ij}$ are $C^1$-Carathéodory functions. Suppose also that there exist $C,\nu,R>0$ such that
\begin{align}
\tag{$a.1$}\label{a.1}
    &|a_{ij}(x,s)|,\ |D_sa_{ij}(x,s)|\le C,\quad i,j=1,\dots,N,\\
   \tag{$a.2$} \label{a.2}
    &A(x,s)\xi\cdot\xi\ge\nu|\xi|^2,\\
   \tag{$a.3$} \label{a.3}
    &|s|\ge R\implies sD_sA(x,s)\xi\cdot\xi\ge0,
\end{align}
for a.e. $x\in\Omega$, for every $s\in\R$ and $\xi\in\R^N$, where $D_sA(x,s)$ denotes the matrix with coefficients $D_sa_{ij}(x,s)$.

\begin{example}
    Let $A(x,s)=\mathcal A(s)\text{Id}$, where $\text{Id}$ is the identity matrix and 
    \[\mathcal A(s)=1+\varepsilon\arctan(s^2),\quad \varepsilon>0.\]
    We compute the derivative:
    \[\mathcal A'(s)=\varepsilon\frac{2s}{1+s^4}.\]
    Since $\arctan(s^2)\le\frac{\pi}{2}$ and $\mathcal A'(s)\to0$ as $|s|\to+\infty$, we deduce that \eqref{a.1} holds. Clearly, $\mathcal A(s)\ge 1$ and \eqref{a.2} is satisfied with $\nu=1$. Moreover, $s\mathcal A'(s)\ge0$ for every $s$, namely \eqref{a.3} is true. Finally, we note also that
    \[\frac{s\mathcal A'(s)}{\mathcal A(s)}=\varepsilon\frac{2s^2}{1+s^4}\cdot\frac{1}{1+\arctan(s^2)}\le\varepsilon\max_{s\in\R}\frac{2s^2}{1+s^4}=:M\varepsilon.\]
    For $\varepsilon>0$ small enough, i.e. $\varepsilon<\frac{\gamma}{M}$, we have
    \[sD_sA(x,s)\xi\cdot\xi\le \gamma A(x,s)\xi\cdot\xi,\quad \gamma\in(0,p-2),\]
    for every $(x,s,\xi)\in\Omega\times\R\times\R^N$, namely the hypothesis \eqref{a.4} below is also satisfied.
\end{example}

\begin{example}
   Let $A(x,s)=\mathcal A(s)\text{Id}$, where $\text{Id}$ is the identity matrix and 
    \[\mathcal A(s)=1+\frac{|s|^\alpha}{1+|s|^\alpha},\quad \alpha>0.\]
 First of all, we compute $\mathcal A'(s)$:
 \[\mathcal A'(s)=\frac{\alpha|s|^{\alpha-2}}{(1+|s|^{\alpha})^2}s.\]
 We note that 
 \[\lim_{|s|\to+\infty} \mathcal A(s)=2,\quad \lim_{|s|\to+\infty}\mathcal A'(s)=0.\]
 Then \eqref{a.1} and \eqref{a.2} is satisfied for some constant $C\ge2$ and with $\nu=1$. We note also that 
 \[s\mathcal A'(s)=\frac{\alpha|s|^\alpha}{(1+|s|^\alpha)^2}\ge0,\]
 and \eqref{a.3} is satisfied for all $s\in\R$. Moreover, 
 \[\frac{s\mathcal A'(s)}{\mathcal A(s)}=\frac{\alpha|s|^\alpha}{(1+|s|^\alpha)^2}\cdot\frac{1+|s|^\alpha}{1+2|s|^\alpha}\le \frac{\alpha|s|^\alpha}{2|s|^{3\alpha}}\le\frac{\alpha}{2R^{2\alpha}}=:\gamma,\]
 when $|s|\ge R$. In particular, $A(x,s)$ satisfies also:
 \[|s|\ge R\implies sD_sA(x,s)\xi\cdot\xi\le\gamma A(x,s)\xi\cdot\xi,\]
 for every $(x,\xi)\in\Omega\times\R^N$ and with $\gamma\in(0,p-2)$ for $R>0$ large enough, i.e. \eqref{a.4} (see below) holds.
\end{example}

Let $g:\Omega\times\mathbb{R}\to\mathbb{R}$ be a Carathéodory function. 
Assume that there exist constants $q>2$, $b\in\R$ and a function 
$a\in L^{r}(\Omega)$,  with $r\ge \frac{2N}{N+2}$, such that, 
for a.e.\ $x\in\Omega$ and every $s\in\mathbb{R}$,
\begin{align}
\tag{$g.1$}\label{g.1}
    |g(x,s)|
        &\le a(x)+b\,|s|^{p-1},\\[4pt]
\tag{$g.2$}\label{g.2}
    |s|\ge R 
        &\;\Longrightarrow\;
        0<q\,G(x,s)\le s\,g(x,s),
\end{align}
where 
\[
    G(x,s):=\int_{0}^{s} g(x,t)\,dt .
\]

We suppose also a superlinear assumption on the coefficients $A(x,s)$:
\begin{equation}
 \tag{$a.4$}\label{a.4} 
 |s|\ge R\implies sD_sA(x,s)\xi\cdot\xi\le \gamma A(x,s)\xi\cdot\xi,
\end{equation}
for a.e. $x\in \Omega$ and for every $\xi\in\R^N$.

\begin{definition}
We  say that $u\in H^1(\Omega)$ is a weak solution of \eqref{P} if 
\begin{equation*}
\int_\Omega A(x,u)\nabla u\cdot\nabla v+\frac12\int_\Omega \left(D_sA(x,u)\nabla u\cdot\nabla u\right)v=\int_\Omega (g(x,u)-\lambda u)v,\ \ \forall\ v\in C^\infty(\overline\Omega).
\end{equation*}
\end{definition}

Weak solutions of \eqref{P} are formally critical points of the energy functional
$f_\lambda: H^1(\Omega)\to\R$,
 \[f_\lambda(u)=\frac12\int_\Omega A(x,u)\nabla u\cdot\nabla u+\frac{\lambda}{2}\int_\Omega u^2-\int_\Omega G(x,u).\]
 The energy functional  is continuous but differentiable only along directions given by \( v \in H^1(\Omega) \cap L^\infty(\Omega) \) and we can not apply standard variational methods.
This is due to the fact that  
\begin{equation}\label{nonsmooth condition}
D_s A(x, u) \nabla u \cdot \nabla u \notin (H^1(\Omega))',
\end{equation} 
since this quantity generally belongs to the Lebesgue space \( L^1(\Omega) \), which is not contained in \( (H^1(\Omega))' \) for \( N \geq 2 \). This issue can be resolved using the abstract concept of weak slope, defined for continuous functions on a metric space, as developed in \cite{corvellec1993deformation,degiovanni1994critical}. This tool allows us to study quasilinear equations such as \eqref{P}. 

The Dirichlet case has been extensively studied in \cite{caninoquasilineare1, caninoserdica, nonsmooththeory1}, where the authors applied the quoted non-smooth critical point theory to prove the existence of \emph{infinitely many bounded solutions}. 

A quasilinear equation, still in the Dirichlet setting, was also investigated by Arcoya and Boccardo~\cite{arcoyaboccardo1}, who constructed a single bounded solution by means of a~priori estimates.

Later, Candela and Palmieri, in \cite{candela2006multiple, candela2009some, candela2009infinitely}, obtained infinitely many bounded solutions in the case of this equation and a corresponding generalization, developing a differentiable critical point framework for such functionals, working in the space \( W_0^{1,p} \cap L^\infty \) where the functional becomes smooth.

We prove the following existence result for a quasilinear elliptic equation with Neumann boundary conditions, representing (to the best of our knowledge) the first result in this framework.
\begin{theorem}\label{main result1}
Let $\lambda\in\R$ and assume that \eqref{a.1}-\eqref{a.4} and \eqref{g.1}-\eqref{g.2} hold. If 
\[
A(x,-s)=A(x,s)\quad \text{and}\quad g(x,-s)=-g(x,s),
\]
 then there exists a sequence $\{u_h\}\subset H^1(\Omega)$ of non-constant weak solutions of \eqref{P}.

Furthermore,  if \eqref{g.1}  holds with $r>\frac{N}{2}$, then any weak solution $u_h\in H^1(\Omega)$ is bounded, i.e. $u_h\in H^1(\Omega)\cap L^\infty(\Omega)$.
\end{theorem}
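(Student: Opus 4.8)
The plan is to realize the weak solutions of \eqref{P} as critical points of the continuous (but, by \eqref{nonsmooth condition}, non-differentiable) energy $f_\lambda$, and to run an equivariant min--max argument within the nonsmooth critical point theory based on the weak slope $|df_\lambda|$ \cite{corvellec1993deformation,degiovanni1994critical}, in the form developed for quasilinear functionals in \cite{caninoquasilineare1,caninoserdica,nonsmooththeory1}. First I would establish the variational dictionary: if $|df_\lambda|(u)<+\infty$, then $f_\lambda$ is differentiable at $u$ along every $v\in H^1(\Omega)\cap L^\infty(\Omega)$, with
\[
\langle f_\lambda'(u),v\rangle=\int_\Omega A(x,u)\nabla u\cdot\nabla v+\frac12\int_\Omega vD_sA(x,u)\nabla u\cdot\nabla u+\lambda\int_\Omega uv-\int_\Omega g(x,u)v ,
\]
and $|\langle f_\lambda'(u),v\rangle|\le|df_\lambda|(u)\,\|v\|_{H^1(\Omega)}$; since $C^\infty(\overline\Omega)\subset H^1(\Omega)\cap L^\infty(\Omega)$ and the quadratic term is well defined ($D_sA(x,u)\nabla u\cdot\nabla u\in L^1(\Omega)$ by \eqref{a.1}), every critical point of $f_\lambda$ is a weak solution of \eqref{P}. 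Under the symmetry hypotheses $f_\lambda$ is even with $f_\lambda(0)=0$, so I would apply a $\Z_2$-equivariant min--max over symmetric sets (a nonsmooth fountain theorem, or a pseudo-index construction in the spirit of Benci, adapted to the weak slope) to produce critical values $c_h$; the subcritical growth \eqref{g.1}, together with the compactness discussed next, forces $c_h\to+\infty$, while the geometry at the origin (ellipticity \eqref{a.2} and subcriticality) and at infinity (from \eqref{g.2}, $G(x,s)\gtrsim|s|^q$ with $q>2$ along finite-dimensional subspaces) is standard.

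The technical heart --- and the step I expect to be the main obstacle --- is the compactness: every $\{u_n\}\subset H^1(\Omega)$ with $\{f_\lambda(u_n)\}$ bounded and $|df_\lambda|(u_n)\to0$ must be shown to be bounded in $H^1(\Omega)$ and to possess a strongly convergent subsequence (a concrete Cerami/Palais--Smale condition). Since $u_n$ need not lie in $L^\infty(\Omega)$, one cannot test $f_\lambda'(u_n)$ directly with $u_n$; the idea is to test with truncations $\vartheta_M(u_n)$, $\vartheta_M(s)=\operatorname{sgn}(s)\min\{|s|,M\}$ (and suitable variants), and to combine three ingredients: the sign-and-growth control of the quasilinear term supplied by \eqref{a.3}--\eqref{a.4} on $\{|u_n|\ge R\}$, namely $0\le u_nD_sA(x,u_n)\nabla u_n\cdot\nabla u_n\le\gamma A(x,u_n)\nabla u_n\cdot\nabla u_n$ with $\gamma$ strictly below the superlinearity threshold in \eqref{g.2}; the Ambrosetti--Rabinowitz inequality \eqref{g.2}, which lets $\int_\Omega G(x,u_n)$ be absorbed into $\tfrac1q\int_\Omega u_ng(x,u_n)$ up to an additive constant; and $|\langle f_\lambda'(u_n),\vartheta_M(u_n)\rangle|\le|df_\lambda|(u_n)\,\|u_n\|_{H^1(\Omega)}$. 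Passing to the limit as $M\to+\infty$ in the resulting chain of inequalities should yield the $H^1$-bound; the delicate point is precisely to keep control of the quasilinear term on the bounded range $\{|u_n|<R\}$, where \eqref{a.3}--\eqref{a.4} provide no information and only \eqref{a.1} is available. For strong convergence, along a subsequence $u_n\wto u$ in $H^1(\Omega)$ and $u_n\to u$ in $L^p(\Omega)$ and a.e.; testing the difference of the ``derivatives'' with $u_n-u$ (again through truncations) and invoking a Boccardo--Murat/Dal Maso--Murat truncation argument yields $\nabla u_n\to\nabla u$ a.e., hence equi-integrability of $A(x,u_n)\nabla u_n\cdot\nabla u_n$, strong convergence in $H^1(\Omega)$, and $f_\lambda'(u)=0$.

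Granting an unbounded sequence $\{c_h\}$ of critical values with associated solutions $\{u_h\}$, the non-constancy is a one-line energy comparison: a constant $u\equiv t$ solves \eqref{P} only if $g(x,t)=\lambda t$ a.e., in which case $f_\lambda(u)=\Phi(t):=\tfrac{\lambda}{2}|\Omega|t^2-\int_\Omega G(x,t)$, and since $G(x,s)\gtrsim|s|^q$ for $|s|\ge R$ with $q>2$, one has $\Phi(t)\to-\infty$ as $|t|\to+\infty$, so $M_0:=\sup\{\Phi(t):u\equiv t\text{ solves }\eqref{P}\}<+\infty$; for $h$ large, $f_\lambda(u_h)=c_h>M_0$, hence $u_h$ is non-constant, and discarding finitely many indices gives the desired sequence (the solutions are distinct because so are their energies).

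For the last assertion, take any weak solution $u\in H^1(\Omega)$ and assume \eqref{g.1} with $r>N/2$. For $k\ge R$ put $A_k:=\{x\in\Omega:|u(x)|>k\}$ and test the weak formulation with $v:=(|u|-k)^+\operatorname{sgn}u\in H^1(\Omega)\cap L^\infty(\Omega)$; this is admissible because the identity in the Definition extends from $C^\infty(\overline\Omega)$ to all of $H^1(\Omega)\cap L^\infty(\Omega)$ by approximation with uniformly bounded sup-norms and a.e.\ convergence (dominated convergence handling the $L^1$ quasilinear term). Since $\nabla v=\chi_{A_k}\nabla u$, \eqref{a.2} gives $\nu\int_{A_k}|\nabla u|^2\le\int_\Omega A(x,u)\nabla u\cdot\nabla v$, while
\[
\frac12\int_\Omega vD_sA(x,u)\nabla u\cdot\nabla u=\frac12\int_{A_k}\frac{v}{u}\,uD_sA(x,u)\nabla u\cdot\nabla u\ge0
\]
by \eqref{a.3} (on $A_k$ one has $|u|\ge R$ and $v/u\ge0$), so this term moves to the favourable side. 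Bounding $|g(x,u)-\lambda u|\le a(x)+b|u|^{p-1}+|\lambda|\,|u|$ on $A_k$ via \eqref{g.1}, using $H^1(\Omega)\hookrightarrow L^{2^*}(\Omega)$, H\"older's inequality with $a\in L^r(\Omega)$, and the bound $|u|\le(|u|-k)+k$ on $A_k$ (which absorbs the superlinear term since $p<2^*$), I would obtain a Stampacchia-type recursion $\phi(h)\le C\,(h-k)^{-\delta}\,\phi(k)^{1+\varepsilon}$ for $\phi(k):=|A_k|$, valid for $k\ge k_0$ once $|A_{k_0}|$ is small, with $\varepsilon>0$ precisely because $r>N/2$. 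The standard iteration lemma then gives $\phi(k)=0$ for $k$ large, i.e.\ $u\in L^\infty(\Omega)$.
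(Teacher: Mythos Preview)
Your overall plan matches the paper's strategy: even continuous functional, equivariant mountain pass for the weak slope, (C)PS-compactness, energy comparison to rule out constants, and a level-set argument for boundedness. The genuine gap is in the compactness step, and you have in fact put your finger exactly on it without resolving it. After testing with $\vartheta_M(u_n)$ and letting $M\to\infty$, the combination $f_\lambda(u_n)-\tfrac1q\langle f_\lambda'(u_n),u_n\rangle$ produces on $\{|u_n|<R\}$ the term
\[
-\frac{1}{2q}\int_{\{|u_n|<R\}} u_n\,D_sA(x,u_n)\nabla u_n\cdot\nabla u_n,
\]
for which the only available bound, from \eqref{a.1}, is $|u_nD_sA\nabla u_n\cdot\nabla u_n|\le NCR\,|\nabla u_n|^2$. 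The resulting coefficient in front of $\int_{\{|u_n|<R\}}A(x,u_n)\nabla u_n\cdot\nabla u_n$ becomes $\tfrac{1}{2q}(q-2-NCR/\nu)$, which can well be negative, and no $H^1$-bound follows from your chain of inequalities. The paper supplies the missing idea as a separate lemma (Proposition~\ref{lemma bound}): for every $\rho>0$ and $\varepsilon>0$ there is $K(\rho,\varepsilon)$ with
\[
\int_{\{|u_n|\le\rho\}} A(x,u_n)\nabla u_n\cdot\nabla u_n\ \le\ \varepsilon\int_{\{|u_n|>\rho\}} A(x,u_n)\nabla u_n\cdot\nabla u_n+K(\rho,\varepsilon).
\]
This is proved by testing the equation with piecewise-linear ``tent'' functions $\theta_k(u_n)$ supported on successive shells $\{k\sigma<|u_n|\le(k+1)\sigma\}$ and iterating until one reaches a shell beyond $R$, where \eqref{a.3} makes the $D_sA$ contribution nonnegative and a further tent with small slope $\delta$ yields the arbitrarily small factor $\varepsilon$. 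With this lemma in hand your computation closes exactly as in the paper: the bad bounded-range contribution is absorbed into the good unbounded-range one, and the $H^1$-bound follows.

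Two smaller points. In the regularity argument your test function $v=(|u|-k)^+\operatorname{sgn}u$ is \emph{not} in $L^\infty(\Omega)$ (that is the conclusion you are after); it is still admissible, but via Theorem~\ref{brezis browder tpye result}, since $v\,D_sA(x,u)\nabla u\cdot\nabla u\ge0$ on $A_k$ by \eqref{a.3} for $k\ge R$. Moreover, your direct Stampacchia recursion with the superlinear right-hand side $b|u|^{p-1}$ needs an extra absorption: writing $|u|\le(|u|-k)+k$ produces a term $\int_{A_k}v^p\lesssim \|v\|_{H^1}^p\,|A_k|^{1-p/2^*}$, and you must exploit $\|v\|_{H^1}^{p-2}\le\|u\|_{H^1}^{p-2}<\infty$ together with $|A_k|\to0$ to move it to the left. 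The paper instead first runs a Moser-type iteration with test functions $u\min\{|u|^{2s},L^2\}\varphi^2$ to obtain $u\in L^t(\Omega)$ for all $t<\infty$, which places $g(x,u)-\lambda u$ in $L^m(\Omega)$ for some $m>N/2$, and only then applies Stampacchia's lemma. Either route can be made to work, but yours needs these details spelled out.
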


{ In particular, if $A(x,s)=\text{Id}$ is the identity matrix, we obtain the elliptic problem:

\[
\begin{cases}
-\Delta u+\lambda u=g(x,u) & \text{in } \Omega, \\
\frac{\partial u}{\partial \eta}=0 & \text{on } \partial \Omega.
\end{cases}
\]
This problem has been extensively studied; see, for example, \cite{lin1988large}, as well as \cite{pariniwethsublinear, saldana2022least} for the case $g(x,s)=|s|^{p-2}s$. The critical problem was initially investigated by \cite{comte1991existence} using the dual method. However, this approach fails in the quasilinear case because, in general, the inverse of the quasilinear operator is not well-defined.

If $g(x,s)=|s|^{p-2}s$, the equation has several physical applications. For instance, it arises in nonlinear optics, where it is related to the Schr\"odinger equation:

\[
\tag{$\mathcal S$}\label{S}
    -i\frac{\partial \Psi}{\partial t}-\Delta\Psi=|\Psi|^{p-2}\Psi \qquad \text{in } \Omega\times(0,+\infty). \\
\]

In this context, the complex-valued function $\Psi(t,x)=e^{i\lambda t}u(x)$ represents a soliton solution of \eqref{S}, meaning that for every fixed $t>0$, the profile remains unchanged. This equation is also relevant in the study of Bose-Einstein condensates, where for $p=4$, it corresponds to the so-called Gross-Pitaevskii equation, which describes the wave function of a condensate at extremely low temperatures \cite{dalfovo1999theory, pitaevskii2016bose}. 

}

\section{Preliminaries}

We recall some results about the critical point theory of continuous functionals, developed in \cite{nonsmooththeory1, corvellec1993deformation,degiovanni1994critical}. In this setting, we consider $X$ a metric space and $f: X \to \R$ a continuous functional.

\begin{definition} \label{definition 1}
Let $X$ be a metric space and let $f: X \to \mathbb{R}$ be a continuous function. We consider $\sigma \ge 0$ such that there exist $\delta > 0$ and a continuous map $\mathscr{H}: B_{\delta}(u) \times [0, \delta] \to X$ such that
\begin{align}
\label{condition 1}
d(\mathscr{H}(v, t), v) \le t, \\
\label{condition 2}
f(\mathscr{H}(v, t)) \le f(v) - \sigma t.
\end{align}
We define 
\[
|df|(u):=\sup\left\{
\begin{aligned}
 &\text{there exist $\delta>0$ and}\\
\sigma\ge0\ :\ \ \   &\text{$\mathscr H\in C(B_\delta(u)\times[0,\delta];X)$}\\
&\text{which satisfy \eqref{condition 1} and \eqref{condition 2}}
\end{aligned}
\right\}
\]
as the weak slope of $f$ at $u$.
\end{definition}

\begin{theorem}[{\cite[Theorem 1.1.2]{nonsmooththeory1}}]\label{theorem 1}
Let $E$ be a normed space and $X \subset E$ an open subset. Let us fix $u \in X$ and $v \in E$ with $\|v\| = 1$. For each $w \in X$ we define
$$\overline{D}_+f(w)[v] := \limsup_{t \to 0^+} \frac{f(w + tv) - f(w)}{t}.$$
Then $|df|(u) \ge -\limsup_{w \to u} \overline{D}_+f(w)[v]$.
\end{theorem}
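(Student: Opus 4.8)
The plan is to exhibit, for every candidate lower bound, a concrete deformation realizing it — namely the straight-line homotopy in the direction $v$ — and then to transfer the infinitesimal rate of decrease encoded by $\overline{D}_+f(\cdot)[v]$ into a genuine finite decrease via a Dini-derivative integration argument. Set $\mu := -\limsup_{w\to u}\overline{D}_+f(w)[v]$. Since $|df|(u)\ge 0$ by Definition \ref{definition 1}, the inequality is trivial when $\mu\le 0$, so I assume $\mu>0$ and fix an arbitrary $\sigma\in[0,\mu)$; it suffices to show that $\sigma$ is one of the numbers over which the supremum defining $|df|(u)$ is taken, since letting $\sigma\uparrow\mu$ then yields $|df|(u)\ge\mu$.

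First I would unwind the definition of $\limsup$. From $\sigma<\mu$ we obtain $\limsup_{w\to u}\overline{D}_+f(w)[v]<-\sigma$, so there exists $\delta_0>0$, which I also take small enough that $\overline{B}_{\delta_0}(u)\subset X$ (possible since $X$ is open and $u\in X$), such that
\[
\overline{D}_+f(w)[v]<-\sigma\qquad\text{for all }w\text{ with }0<\|w-u\|<\delta_0.
\]
I then define the homotopy $\mathscr{H}(w,t):=w+tv$ on $B_\delta(u)\times[0,\delta]$ with $\delta:=\delta_0/2$. This map is continuous, takes values in $X$ because $\|w+tv-u\|\le\|w-u\|+t<\delta_0$, and satisfies $d(\mathscr{H}(w,t),w)=\|tv\|=t$, so \eqref{condition 1} holds automatically. (Here $w$ denotes the running point of $B_\delta(u)$, playing the role of the variable in Definition \ref{definition 1}, while the direction $v$ is fixed.)

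The heart of the argument is verifying \eqref{condition 2}, i.e. $f(w+tv)\le f(w)-\sigma t$ for every $w\in B_\delta(u)$ and $t\in[0,\delta]$. Fixing such a $w$ and setting $g(s):=f(w+sv)$ for $s\in[0,t]$, the function $g$ is continuous and its upper right Dini derivative is $D^+g(s)=\overline{D}_+f(w+sv)[v]$. As $s$ ranges over $[0,t]$ the point $w+sv$ stays in $B_{\delta_0}(u)$ and coincides with $u$ for at most one value of $s$; off that single (hence countable) exceptional value we have $D^+g(s)<-\sigma$. I would then invoke the standard comparison lemma for Dini derivatives: a continuous function whose upper right Dini derivative is $\le-\sigma$ outside an at most countable set satisfies $g(t)-g(0)\le-\sigma t$. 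This gives exactly $f(w+tv)\le f(w)-\sigma t$, so $\sigma$ is admissible, and the proof concludes by taking $\sigma\uparrow\mu$.

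The main obstacle is precisely this last passage from the infinitesimal bound on $\overline{D}_+f$ to the finite-increment inequality: the hypothesis only controls a $\limsup$ of difference quotients as $t\to0^+$, whereas \eqref{condition 2} must hold for all $t\in[0,\delta]$. Making this rigorous requires the Dini-derivative comparison lemma (an elementary but non-trivial real-analysis fact, established by a standard barrier/supremum argument), together with the care needed to absorb the single point where the segment $s\mapsto w+sv$ meets $u$, at which no bound on $\overline{D}_+f$ is available. Everything else — continuity of the straight-line homotopy, its remaining inside the open set $X$, and condition \eqref{condition 1} — is immediate.
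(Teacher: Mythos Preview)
The paper does not actually supply a proof of this theorem: it is quoted verbatim from \cite[Theorem 1.1.2]{nonsmooththeory1} and used as a black box, so there is no ``paper's own proof'' to compare against. Your argument is nonetheless correct and is essentially the standard one: the straight-line deformation $\mathscr{H}(w,t)=w+tv$ together with the Dini-derivative comparison lemma (continuous $g$ with $D^+g\le -\sigma$ off a countable set implies $g(t)-g(0)\le -\sigma t$) is exactly how the inequality is established in the cited reference. Your handling of the single exceptional point $w+sv=u$, where no bound on $\overline{D}_+f$ is assumed, is the only subtlety, and you deal with it properly.
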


\begin{definition}\label{def p.critical}
Let $X$ be a metric space and let $f: X \to \mathbb{R}$ be continuous. We will say that $u \in X$ is a (lower) critical point if $|df|(u) = 0$. A (lower) critical point is said to be at the level $c \in \mathbb{R}$ if it is also true that $f(u) = c$.
\end{definition}

\begin{definition}\label{ps}
Let $X$ be a metric space and let $f:X\to\R$ be a continuous functional. A sequence $\{u_n\}\subset X$ is said a $(PS)_c$-sequence if
\begin{align}
\label{ps1}
&f(u_n)\to c,\\
\label{ps2}
&|df|(u_n)\to0.
\end{align}
Furthermore, we will say that $f$ satisfies the $(PS)_c$-condition if every subsequence admits a convergent subsequence in $X$. 
If the $(PS)_c$-condition holds for every $c\in\R$ we will simply write $(PS)$-condition.
\end{definition}
\begin{theorem}[Equivariant Mountain Pass, {\cite[Theorem 1.3.3]{nonsmooththeory1}}]\label{MPequi}	\hfill\\
Let $X$ be a Banach space and let $f:X\to\R$ be a continuous even functional. Suppose that
\begin{itemize}
\item $\exists\ \rho>0,\alpha>f(0)$ and a subspace $X_2\subset X$ of finite codimension such that\\ $f\ge\alpha\ on\ \partial B_{\rho}\cap X_2$,\
\item for every subspace $X_1^{(k)}$ of dimension $k$, there exists $R=R^{(k)}>0$ such that $f\le f(0)$ in $B_R^c\cap X_1^{(k)}$.
\end{itemize}
If $f$ satisfies the $(PS)_c$-condition for every $c\ge\alpha$, then there exists  a divergent sequence of critical values, namely there exists a sequence $\{u_n\}\subset X$ such that $c_n:=f(u_n)\to+\infty$.
\end{theorem}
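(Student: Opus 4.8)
The plan is to run an equivariant minimax scheme governed by the Krasnoselskii genus and powered by an \emph{odd} deformation lemma for continuous functionals in the weak-slope framework. Since adding a constant to $f$ alters neither its weak slope nor its critical set, I would first normalize $f(0)=0$, so the hypotheses read $\alpha>0$, $f\ge\alpha$ on $\partial B_\rho\cap X_2$, and $f\le 0$ on $B_R^c\cap X_1^{(k)}$ for every finite-dimensional $X_1^{(k)}$. Let $\Sigma$ be the family of closed symmetric subsets of $X\setminus\{0\}$ and $\gamma\colon\Sigma\to\N\cup\{+\infty\}$ the genus; I will use its monotonicity and subadditivity, the normalization $\gamma(\partial B_\rho\cap V)=\dim V$ (Borsuk--Ulam), the neighborhood property (a compact $A\in\Sigma$ has a symmetric neighborhood $N$ with $\gamma(\overline N)=\gamma(A)$), and the intersection property: if $\psi\colon A\to\R^m$ is odd and continuous with $A\cap\psi^{-1}(0)=\emptyset$, then $\gamma(A)\le m$.

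Set $m:=\operatorname{codim}X_2$ and fix an increasing chain $V_m\subset V_{m+1}\subset\cdots$ of finite-dimensional subspaces with $\dim V_j=j$ and $X=V_m\oplus X_2$. Writing $D_j:=\overline{B_{R_j}}\cap V_j$ with $R_j$ from the second hypothesis, $\Lambda_j$ for the odd continuous maps $D_j\to X$ equal to the identity on $\partial D_j$, and letting $P$ be the projection onto $V_m$ along $X_2$, I would define, for $n\ge1$,
\[
\Gamma_n:=\bigl\{\,\overline{h(D_j\setminus Y)}\ :\ j\ge m+n,\ h\in\Lambda_j,\ Y\in\Sigma,\ \gamma(Y)\le j-(m+n)\,\bigr\},\qquad c_n:=\inf_{B\in\Gamma_n}\ \sup_{u\in B}f(u).
\]
Taking $Y=\emptyset$, $h=\operatorname{id}$, $j=m+n$ shows $c_n\le\max_{D_{m+n}}f<\infty$, and the constraint on $\gamma(Y)$ gives $\Gamma_{n+1}\subset\Gamma_n$, hence $c_n\le c_{n+1}$. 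The genus-subtraction built into $\Gamma_n$ is exactly what later absorbs neighborhoods of critical sets. For the lower bound I would show that every $B=\overline{h(D_j\setminus Y)}\in\Gamma_n$ meets $\partial B_\rho\cap X_2$: the zero set of the odd map $u\mapsto Ph(u)$ has large genus by a Borsuk--Ulam count (because $\dim V_j=j>m$ and $h=\operatorname{id}$ on $\partial D_j$), it is not swallowed by $Y$ in view of $\gamma(Y)\le j-(m+n)$, and along it $\|h(u)\|$ runs from $0$ (at $u=0$) to $R_j>\rho$ (on $\partial D_j$), so it crosses the value $\rho$. Hence $\sup_B f\ge\inf_{\partial B_\rho\cap X_2}f\ge\alpha$, i.e.\ $\alpha\le c_n<\infty$.

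The descent is powered by an equivariant deformation lemma: for $c\ge\alpha$, assuming $(PS)_c$ and given a symmetric neighborhood $U$ of $K_c:=\{\,|df|=0\,\}\cap f^{-1}(c)$ (with $U=\emptyset$ when $K_c=\emptyset$), the deformation theory of \cite{corvellec1993deformation,degiovanni1994critical} yields $\varepsilon>0$ and a continuous $\eta\colon X\times[0,1]\to X$, odd in the first variable, equal to the identity off $f^{-1}([c-\varepsilon,c+\varepsilon])$, nonincreasing along $f$, with $\eta(\{f\le c+\varepsilon\}\setminus U,1)\subset\{f\le c-\varepsilon\}$; more generally, over a compact interval whose critical points all lie in $U$ it deforms the corresponding sublevels downward across the interval. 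Granting this, each $c_n$ is a critical value: were $K_{c_n}=\emptyset$, applying $\eta(\cdot,1)$ to a near-optimal $B\in\Gamma_n$ (legitimate since $\eta(\cdot,1)\circ h\in\Lambda_j$, as $\eta=\operatorname{id}$ near $\partial D_j$ where $f\le0<c_n$) would yield an element of $\Gamma_n$ with $\sup f\le c_n-\varepsilon$, contradicting the definition of $c_n$. For divergence, suppose $c_n\le\bar c<\infty$ for all $n$. Then $K:=\{\,|df|=0\,\}\cap f^{-1}([\alpha,\bar c+1])$ is compact by $(PS)$, so $q:=\gamma(K)<\infty$ and $K$ has a symmetric neighborhood $U$ with $\gamma(\overline U)\le q$. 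Pick $B=\overline{h(D_i\setminus Y)}\in\Gamma_{q+1}$ with $\sup_B f\le c_{q+1}+\delta\le\bar c+1$; enlarging $Y$ to $Y\cup h^{-1}(\overline U)$ (raising its genus by at most $q$) produces $\overline{h(D_i\setminus(Y\cup h^{-1}(\overline U)))}\in\Gamma_1$, a set whose image avoids $U$ and hence all critical points in $[\alpha,\bar c+1]$; deforming it across that interval drives $\sup f$ below $\alpha$, contradicting $c_1\ge\alpha$. Therefore $c_n\to+\infty$, giving the desired divergent sequence of critical values; the same absorbing mechanism shows $\gamma(K_c)\ge p+1$ whenever $c_n=\cdots=c_{n+p}=c$, so the critical points are in fact abundant.

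The hard part will be the equivariant deformation lemma. Unlike the smooth theory there is no gradient flow, so the odd homotopy $\eta$ must be manufactured from the weak slope itself: the quantitative noncritical deformation of \cite{corvellec1993deformation} supplies a descent homotopy, and the delicate point is to symmetrize it without destroying the estimate $f(\eta(u,1))\le c-\varepsilon$. This is legitimate because $f$ and $|df|$ are invariant under $u\mapsto-u$, but it requires symmetrizing the underlying flow (or multifunction) rather than a single pseudo-gradient field. The second nontrivial ingredient is the intersection count in the lower bound, where the classical topological-degree argument must be replaced by a genus-based Borsuk--Ulam estimate valid for merely continuous odd maps, combined with the genus subtraction encoded in $\Gamma_n$; the attendant index bookkeeping is what fixes the admissible range of $n$.
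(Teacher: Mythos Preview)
The paper does not prove this theorem: it is quoted verbatim from \cite[Theorem~1.3.3]{nonsmooththeory1} and used as a black box in the proof of Theorem~\ref{main result1}. There is therefore no ``paper's own proof'' to compare against.

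That said, your sketch follows the expected route for results of this type --- the genus-based minimax scheme of Ambrosetti--Rabinowitz transported to the weak-slope setting of \cite{corvellec1993deformation,degiovanni1994critical} --- and is broadly consistent with how the cited reference proceeds. Your identification of the two genuinely delicate points (the $\Z_2$-equivariant deformation lemma for merely continuous functionals, and the Borsuk--Ulam intersection estimate underlying $c_n\ge\alpha$) is accurate; these are exactly the places where the nonsmooth theory requires work beyond the classical $C^1$ argument. As a proof \emph{strategy} this is sound; as a proof it would of course need the deformation lemma stated precisely and either proved or cited, and the linking argument written out rather than narrated.
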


We investigate the regularity of $f_\lambda$ with the following:
\begin{proposition}\label{differenziabilità funzionale}
   Let \( u \in H^1(\Omega) \). Then $f_\lambda$ is continuous at \( u \) and one has that
    \[|df_\lambda|(u)\ge\sup_{\substack{v\in C^\infty(\overline\Omega)\\ \|v\|_{H^1}\le1}}\left\{\int_\Omega A(x,u)\nabla u\cdot\nabla v+\lambda\int_\Omega uv+\frac12\int_\Omega \left(D_sA(x,u)\nabla u\cdot\nabla u\right) v-\int_\Omega g(x,u)v\right\}.\]
\end{proposition}
\begin{proof}
We divide the proof in three steps.

\textbf{Step 1}. The functional $f_\lambda$ is continuous.

    Let $\{u_n\}\subset H^1(\Omega)$ such that $u_n\to u$ in $H^1(\Omega)$. Then 
    \[A(x,u_n)\to A(x,u),\quad D_sA(x,u_n)\to D_sA(x,u),\quad G(x,u_n)\to G(x,u) \]
   for a.e. $x\in\Omega$, by continuity. From \eqref{a.1}
   \begin{align*}
       |A(x,u_n)\nabla u_n\cdot\nabla u_n|\le NC|\nabla u_n|^2.
   \end{align*}
   Since $|\nabla u_n|\to|\nabla u|$ in $L^2(\Omega)$, we have that there exists $c>0,\xi\in L^1(\Omega)$ such that $|\nabla u_n|^2\le c|\xi|$. Thus, by Lebesgue's Theorem of Dominated Convergence, we obtain:
   \[\int_\Omega A(x,u_n)\nabla u_n\cdot\nabla u_n\to\int_\Omega A(x,u)\nabla u\cdot\nabla u.\]
   Now, by Mean Value Theorem, there exists $\theta\in(0,1)$  such that
   \[|G(x,u_n)|=|G(x,u_n)-G(x,0)|=|g(x,\theta u_n)u_n|\le a(x)|u_n|+\theta b|u_n|^p.\]
   By Sobolev's embedding $u_n\to u$ in $L^1(\Omega)$ and in $L^p(\Omega)$, hence there exist $\omega_1,\omega_2\in L^1(\Omega)$ and $c_1,c_2>0$ such that $|u_n|\le c_1\omega_1$ and $|u_n|^p\le c_2\omega_2$. Again, from Lebesgue's Theorem we deduce that 
   \[\int_\Omega G(x,u_n)\to \int_\Omega G(x,u).\]
 Moreover, Sobolev's embedding implies also that 
 \[\lambda\int_\Omega u_n^2\to\lambda\int_\Omega u^2.\]
   Then, $f_\lambda(u_n)\to f_\lambda(u)$.
   
   \textbf{Step 2}. For every $v\in H^1(\Omega)\cap L^\infty(\Omega)$, we have that 
   \[
   \langle f_\lambda'(u),v\rangle=\int_\Omega A(x,u)\nabla u\cdot\nabla v+\lambda\int_\Omega uv+\frac12\int_\Omega \left(D_sA(x,u)\nabla u\cdot\nabla u\right)v-\int_\Omega g(x,u)v.
   \]
   We will prove that 
   \[\lim_{t\to0}\frac{f_\lambda(u+tv)-f_\lambda(u)}{t}=\int_\Omega A(x,u)\nabla u\cdot\nabla v+\lambda\int_\Omega uv+\frac12 \int_\Omega \left(D_sA(x,u)\nabla u\cdot\nabla u\right)v-\int_\Omega g(x,u)v.\]
Let $h(x,s,\xi)$ be a $C^1$-Carathéodory function differentiable in $s,\xi$, then by Mean Value Theorem there exists $\theta\in\R$ with $|\theta|\le|t|$ such that
\[\frac{h(x,u+tv,\nabla u+t\nabla v)-h(x,u,\nabla u)}{t}=D_sh(x,u+\theta v,\nabla u+\theta\nabla v)v+D_\xi h(x,u+\theta v,\nabla u+\theta\nabla v)\cdot\nabla v.\]
We consider the two cases:
\begin{itemize}
    \item $h(x,s,\xi)=G(x,s)-\frac{\lambda}{2}s^2$.
    \[\left|\frac{h(x,u+tv)-h(x,u)}{t}\right|=|g(x,u+\theta v)v|+|\lambda|\cdot|(u+\theta v)v|\le a(x)v+b\theta|v|^p+|u+\theta v|\cdot|v|\in L^1(\Omega).\]
  
    \item $h(x,s,\xi)=\frac12 A(x,s)\xi\cdot\xi$.
   \[\left|\frac{h(x,u+tv,\nabla u+t\nabla v)-h(x,u,\nabla u)}{t}\right|\le \frac{NC}{2}|\nabla(u+\theta v)|^2v+\frac{NC}{2}|\nabla(u+\theta v)\cdot\nabla v|\in L^1(\Omega).\]
\end{itemize}

Thus,
\[\langle f_\lambda'(u),v\rangle=\int_\Omega A(x,u)\nabla u\cdot\nabla v+\lambda\int_\Omega uv+\frac12\int_\Omega \left(D_sA(x,u)\nabla u\cdot\nabla u\right)v-\int_\Omega g(x,u)v.\]

\textbf{Step 3}. We have that 
\[
|df_\lambda|(u)\ge\sup_{\substack{v\in C^\infty(\overline\Omega)\\ \|v\|_{H^1}\le1}}\langle f_\lambda'(u),v\rangle.
\]
  Let $\varphi\in C^\infty(\overline\Omega)$, according to Theorem \ref{theorem 1} and Proposition \ref{differenziabilità funzionale}, we have that
    \[\|\varphi\|_{H^1}\cdot|df_\lambda|(u)\ge-\langle f_\lambda'(u),\varphi\rangle=\langle f_\lambda'(u),-\varphi\rangle.\]
    Thus, choosing  $v=-\varphi$ with $\|\varphi\|_{H^1}\le1$, we obtain that
    \[|df_\lambda|(u)\ge\langle f_\lambda'(u),v\rangle,\quad \forall\ v\in C^\infty(\overline\Omega),\ \|v\|_{H^1}\le1.\]
Taking the supremum, the thesis follows.
\end{proof}

\begin{definition}\label{definitio CPS}
    A sequence $\{u_n\}\subset H^1(\Omega)$ is said to be a Concrete Palais-Smale  at level $c$ for $f_\lambda$, $(CPS)_c$-sequence, if
\begin{itemize}
\item $f_\lambda(u_n)\to c$ in $\R$.
\item $ -div(A(x,u_n)\nabla u_n)+\frac{1}{2}D_sA(x,u_n)\nabla u_n\cdot\nabla u_n+\lambda u_n-g(x,u_n) \in  \left(H^{1}(\Omega)\right)'$ eventually as $n\to+\infty$.  
\item $-div(A(x,u_n)\nabla u_n)+\frac{1}{2}D_sA(x,u_n)\nabla u_n\cdot\nabla u_n+\lambda u_n-g(x,u_n)\to 0$ strongly in $\left(H^{1}(\Omega)\right)'$.
\end{itemize}
We will say that $f_\lambda$ satisfies the  $(CPS)_c$-condition if every  $(CPS)_c$-sequence admits a convergent subsequence in $ H^1(\Omega)$.
\end{definition}
\begin{proposition}\label{PS e CPS}
Let $c,\lambda\in\R$.
The following facts hold:
\begin{enumerate}
\item[$(i)$] if $u$ is a lower critical point for $f_\lambda$, then $u$ is a weak solution of the problem \eqref{P},
\item[$(ii)$] each $(PS)_c$-sequence is also a $(CPS)_c$-sequence,
\item[$(iii)$] $f_\lambda$ satisfies $(CPS)_c$-condition$\implies f_\lambda$ satisfies $(PS)_c$-condition.
\end{enumerate}
\end{proposition}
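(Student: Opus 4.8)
The plan is to obtain all three assertions from Corollary~\ref{weak slope stima}, the explicit formula for $\langle f_\lambda'(u),v\rangle$ established in Proposition~\ref{differenziabilità funzionale}, and the density of $C^\infty(\overline\Omega)$ in $H^1(\Omega)$ (valid because $\Omega$ is a bounded Lipschitz domain).

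For $(i)$, I would start from the hypothesis $|df_\lambda|(u)=0$. By Corollary~\ref{weak slope stima} this forces $\langle f_\lambda'(u),v\rangle\le 0$ for every $v\in C^\infty(\overline\Omega)$ with $\|v\|_{H^1}\le 1$. Given an arbitrary nonzero $\varphi\in C^\infty(\overline\Omega)$, applying this inequality to $v=\varphi/\|\varphi\|_{H^1}$ and to $v=-\varphi/\|\varphi\|_{H^1}$, and using that $v\mapsto\langle f_\lambda'(u),v\rangle$ is linear, yields $\langle f_\lambda'(u),\varphi\rangle=0$ for every $\varphi\in C^\infty(\overline\Omega)$. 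Writing this identity out (the term $\frac12\int_\Omega\varphi\,D_sA(x,u)\nabla u\cdot\nabla u$ being meaningful because its integrand lies in $L^1(\Omega)$ by \eqref{a.1} and $\varphi\in L^\infty(\Omega)$) and rearranging gives exactly the weak-solution identity of the Definition, so $u$ is a weak solution of \eqref{P}.

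For $(ii)$, let $\{u_n\}$ be a $(PS)_c$-sequence. The first requirement of Definition~\ref{definitio CPS}, $f_\lambda(u_n)\to c$, is part of the $(PS)_c$ hypothesis. Since $|df_\lambda|(u_n)\to 0$, we have $|df_\lambda|(u_n)<+\infty$ for all sufficiently large $n$, and for such $n$ Corollary~\ref{weak slope stima} shows that $v\mapsto\langle f_\lambda'(u_n),v\rangle$ is a linear functional on $C^\infty(\overline\Omega)$ bounded, in the $H^1$-norm, by $|df_\lambda|(u_n)$. By density of $C^\infty(\overline\Omega)$ in $H^1(\Omega)$ it extends uniquely to $w_n\in(H^1(\Omega))'$ with $\|w_n\|_{(H^1(\Omega))'}\le|df_\lambda|(u_n)$. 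Testing $w_n$ against $\varphi\in C_c^\infty(\Omega)$ and using the formula of Proposition~\ref{differenziabilità funzionale} identifies $w_n$, in the sense of distributions, with $-\text{div}(A(x,u_n)\nabla u_n)+\frac12 D_sA(x,u_n)\nabla u_n\cdot\nabla u_n+\lambda u_n-g(x,u_n)$. Hence this expression belongs to $(H^1(\Omega))'$ and, since $\|w_n\|_{(H^1(\Omega))'}\le|df_\lambda|(u_n)\to0$, it tends to $0$ strongly there; thus $\{u_n\}$ is a $(CPS)_c$-sequence.

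Assertion $(iii)$ follows at once from $(ii)$: a $(PS)_c$-sequence is, by $(ii)$, also a $(CPS)_c$-sequence, so under the $(CPS)_c$-condition it admits a subsequence converging in $H^1(\Omega)$, which is precisely the $(PS)_c$-condition. The only genuinely delicate point is the extension step in $(ii)$ --- one needs the density of $C^\infty(\overline\Omega)$ in $H^1(\Omega)$, which is where the Lipschitz regularity of $\partial\Omega$ enters, together with the verification that the extended functional $w_n$ really coincides with the formal Euler--Lagrange expression and is not merely one of its extensions --- while everything else is routine bookkeeping with the formula from Proposition~\ref{differenziabilità funzionale}.
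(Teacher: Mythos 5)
Your argument is correct and follows essentially the same route as the paper: part $(i)$ via Corollary~\ref{weak slope stima} plus linearity (testing with $\pm\varphi$), part $(ii)$ by reading the same corollary as a bound on the operator norm of $v\mapsto\langle f_\lambda'(u_n),v\rangle$ over $C^\infty(\overline\Omega)$ and extending by density, and part $(iii)$ as an immediate consequence of $(ii)$. You simply spell out the extension step in $(ii)$ that the paper leaves implicit; no discrepancy in substance.
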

\begin{proof}
 \begin{itemize}
     \item[$(i)$] Let $u\in H^1(\Omega)$ be a critical point for $f_\lambda$. Then $|df_\lambda|(u)=0$ and, from Proposition \ref{differenziabilità funzionale}, we have
     \[\sup_{\substack{v\in C^\infty(\overline\Omega)\\ \|v\|_{H^1}\le1}}\left\{\int_\Omega A(x,u)\nabla u\cdot\nabla v+\lambda\int_\Omega uv+\frac12\int_\Omega \left(D_sA(x,u)\nabla u\cdot\nabla u\right) v-\int_\Omega g(x,u)v\right\}\le0.\]
     Hence, 
     \[\int_\Omega A(x,u)\nabla u\cdot\nabla v+\lambda\int_\Omega uv+\frac12\int_\Omega \left(D_sA(x,u)\nabla u\cdot\nabla u\right)v=\int_\Omega g(x,u)v,\ \ \forall\ v\in C^\infty(\overline\Omega),\]
     which implies that $u$ is a weak solution of \eqref{P}.
\item[$(ii)$] Let $\{u_n\}$ be a $(PS)_c$-sequence for $f_\lambda$. As before,
 \[\sup_{\substack{v\in C^\infty(\overline\Omega)\\ \|v\|_{H^1}\le1}}\left\{\int_\Omega A(x,u_n)\nabla u_n\cdot\nabla v+\lambda\int_\Omega u_nv+\frac12\int_\Omega \left(D_sA(x,u_n)\nabla u_n\cdot\nabla u_n\right) v-\int_\Omega g(x,u_n)v\right\}\le o(1).\]
 Thus, $\{u_n\}$ is a $(CPS)_c$-sequence.
     \item[$(iii)$] Assume that $f_\lambda$ satisfies the $(CPS)_c$-condition and let $\{u_n\}$ be a $(PS)_c$ sequence. From $(ii)$ we have that $\{u_n\}$ is also a $(CPS)_c$-sequence and the $(CPS)_c$-condition implies that there exists $\{u_{n_k}\}\subset \{u_n\}$ such that $u_{n_k}\to u$ in $H^1(\Omega)$ as $k\to+\infty$. Thus, the $(PS)_c$-condition holds.
     
 \end{itemize}
\end{proof}
\section{Regularity results}
We prove the following integrability result:
\begin{theorem}\label{brezis browder tpye result}
    Let $u\in H^1(\Omega)$ and let $\omega\in (H^1(\Omega))'$ be such that
    \begin{equation}\label{weak formulat thm3.1}
     \int_\Omega A(x,u)\nabla u\cdot\nabla v+\frac12\int_\Omega \left(D_sA(x,u)\nabla u\cdot\nabla u\right)v=\langle\omega,v\rangle,\ \ \forall\ v\in C^\infty(\overline\Omega).   
    \end{equation}
     Let $w\in H^1(\Omega)$ such that
     \[\bigg[\left(D_sA(x,u)\nabla u\cdot\nabla u\right) w\bigg]^{-}\in L^1(\Omega).\]
     
    Then we have
\[
 \left(D_s A(x,u) \nabla u \cdot \nabla u\right)w \in L^1(\Omega)
\]
and 
\[
\int_\Omega A(x,u) \nabla u \cdot \nabla w
+ \frac12 \int_\Omega \bigl( D_s A(x,u) \nabla u \cdot \nabla u \bigr) w
= \langle \omega, w \rangle.
\]
\end{theorem}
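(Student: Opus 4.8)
The plan is to approximate the test function $w$ by truncations so that we may use it (multiplied by a cutoff) in the weak formulation \eqref{weak formulat thm3.1}, and then pass to the limit using Fatou's lemma on the sign-definite part of the quasilinear term. The natural device is to test with $v = T_k(w) = \max\{-k,\min\{k,w\}\}$; since $w \in H^1(\Omega)$, each $T_k(w) \in H^1(\Omega) \cap L^\infty(\Omega)$, and by density of $C^\infty(\overline\Omega)$ in $H^1(\Omega)$ together with the continuity of all terms in \eqref{weak formulat thm3.1} with respect to $H^1 \cap L^\infty$ directions (both sides are continuous on such $v$ because $A(x,u)\nabla u \in L^2$, $D_sA(x,u)\nabla u\cdot\nabla u \in L^1$ multiplied by $L^\infty$, and $\omega \in (H^1(\Omega))'$), we obtain
\[
\int_\Omega A(x,u)\nabla u\cdot\nabla T_k(w)+\frac12\int_\Omega \bigl(D_sA(x,u)\nabla u\cdot\nabla u\bigr)T_k(w)=\langle\omega,T_k(w)\rangle.
\]
Here $\nabla T_k(w) = \nabla w \,\mathbf 1_{\{|w|<k\}}$, so the first integral is $\int_{\{|w|<k\}} A(x,u)\nabla u\cdot\nabla w$, which converges to $\int_\Omega A(x,u)\nabla u\cdot\nabla w$ as $k\to\infty$ by dominated convergence (the integrand is in $L^1$ since $A(x,u)\nabla u,\nabla w\in L^2$). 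The right-hand side converges to $\langle\omega,w\rangle$ since $T_k(w)\to w$ in $H^1(\Omega)$.

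The crux is the middle term $\tfrac12\int_\Omega \bigl(D_sA(x,u)\nabla u\cdot\nabla u\bigr)T_k(w)$. Write $\Phi:=D_sA(x,u)\nabla u\cdot\nabla u$; the hypothesis says $(\Phi w)^-\in L^1(\Omega)$, i.e.\ $\Phi w$ is bounded below by an $L^1$ function. Split $\Omega$ into $\{w\ge 0\}$ and $\{w<0\}$. On $\{w\ge 0\}$ we have $\Phi\, T_k(w)$ increasing in $k$ with limit $\Phi w$, so its negative part is controlled and we may apply the monotone/dominated convergence machinery after first splitting $\Phi\,T_k(w)$ into positive and negative parts; the positive part increases to $(\Phi w)^+$ (possibly $+\infty$), the negative part is dominated by $(\Phi w)^-\in L^1$. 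Symmetrically on $\{w<0\}$. Combining with the two already-established limits forces
\[
\lim_{k\to\infty}\int_\Omega \bigl(\Phi\, T_k(w)\bigr)^+ = \langle\omega,w\rangle-\int_\Omega A(x,u)\nabla u\cdot\nabla w+\int_\Omega\bigl(\Phi\, T_k(w)\bigr)^-+o(1)<\infty,
\]
and hence $(\Phi w)^+\in L^1(\Omega)$ by Fatou. Therefore $\Phi w\in L^1(\Omega)$, i.e.\ $w\,D_sA(x,u)\nabla u\cdot\nabla u\in L^1(\Omega)$, and dominated convergence (now with dominating function $|\Phi w|\in L^1$) gives $\int_\Omega \Phi\,T_k(w)\to\int_\Omega\Phi w$. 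Passing to the limit in the displayed identity yields the claimed formula.

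The main obstacle I anticipate is the bookkeeping in the previous paragraph: one cannot split $\Phi\,T_k(w)$ globally into $(\Phi w)^\pm$ because $\Phi$ itself is not sign-definite and $T_k(w)$ mixes signs with $w$ only where they agree. The clean way around this is to test separately with $T_k(w^+)$ and with $-T_k(w^-)$ (both legitimate, being nonnegative truncations of $H^1$ functions), so that on the support of each test function $T_k$ has the same sign as $w$ and $\Phi\,T_k(w^\pm)$ is monotone in $k$ with a one-sided $L^1$ bound coming directly from $(\Phi w)^-\in L^1$; then Beppo Levi applies on each piece. A secondary point to verify carefully is that \eqref{weak formulat thm3.1}, stated for $v\in C^\infty(\overline\Omega)$, indeed extends to all $v\in H^1(\Omega)\cap L^\infty(\Omega)$: this follows by taking $v_j\in C^\infty(\overline\Omega)$ with $v_j\to v$ in $H^1(\Omega)$ and $\|v_j\|_\infty\le C$ (standard for Lipschitz domains, e.g.\ via mollification after extension and truncation), since then $\nabla v_j\to\nabla v$ in $L^2$, $v_j\to v$ a.e., and $|\Phi v_j|\le C|\Phi|\in L^1$ permits dominated convergence in the middle term.
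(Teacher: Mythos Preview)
Your proof is correct and follows the same overall strategy as the paper: approximate $w$ by bounded functions lying pointwise between $0$ and $w$, use Fatou to deduce $(\Phi w)^+\in L^1(\Omega)$, and then pass to the limit by dominated convergence. The paper invokes the Br\'ezis--Browder approximants $v_n=\lambda_n w$ (with $0\le\lambda_n\le1$ built from an auxiliary smooth sequence $\tilde v_n\to w$), whereas you use the plain truncation $T_k(w)$; both work for the same reason, namely that the approximant is $w$ multiplied by a factor in $[0,1]$, so the negative part of $\Phi$ times the approximant is dominated by $(\Phi w)^-\in L^1(\Omega)$. Your choice is arguably simpler since it avoids the auxiliary sequence.

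Your self-identified ``obstacle'' is in fact not one: since $T_k(w)$ always has the same sign as $w$ and $|T_k(w)|\le|w|$, one may write $T_k(w)=\lambda_k(x)\,w$ with $\lambda_k(x)=\min\{1,k/|w(x)|\}\in[0,1]$ (set $\lambda_k=1$ where $w=0$). Consequently $(\Phi\,T_k(w))^-=\lambda_k(\Phi w)^-\le(\Phi w)^-\in L^1(\Omega)$ and $(\Phi\,T_k(w))^+=\lambda_k(\Phi w)^+\uparrow(\Phi w)^+$, so the Fatou/monotone-convergence step goes through directly without the separate treatment of $T_k(w^+)$ and $-T_k(w^-)$ you propose as a fix. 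The extension of \eqref{weak formulat thm3.1} from $C^\infty(\overline\Omega)$ to $H^1(\Omega)\cap L^\infty(\Omega)$ that you sketch is handled in the paper the same way (see Remark~\ref{density remark}).
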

In the Dirichlet case, see \cite[Theorem 2.2.1]{nonsmooththeory1}, this result is a consequence of the following Brezis-Browder  Theorem: 
\begin{theorem}[Brezis-Browder, '78, \cite{brezisbrowder1}]
    Let $\Omega\subseteq\R^N$ be an open set. Let $T\in H^{-1}(\Omega)\cap L^1_{\text{loc}}(\Omega)$ and $u\in H_0^1(\Omega)$ such that $T(x)u(x)\ge g(x)$ a.e. in $\Omega$ with $g\in L^1(\Omega)$. Then $T(x)u(x)\in L^1(\Omega)$ and 
    \[\langle T,u\rangle=\int_\Omega T(x)u(x)\,dx.\]
\end{theorem}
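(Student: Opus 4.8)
The plan is to prove the identity by truncation and a two-stage approximation, exploiting the one-sided bound only through a Fatou-type lower semicontinuity. Set $g^-:=\max(-g,0)\in L^1(\Omega)$ and let $\psi_k(s):=\max(-k,\min(s,k))$ be the truncation at level $k$; since $\psi_k$ is Lipschitz with $\psi_k(0)=0$, one has $\psi_k(u)\in H_0^1(\Omega)\cap L^\infty(\Omega)$ and $\psi_k(u)\to u$ in $H_0^1(\Omega)$, so $\langle T,\psi_k(u)\rangle\to\langle T,u\rangle$. Note $\langle T,u\rangle$ is finite because $T\in H^{-1}(\Omega)=(H_0^1(\Omega))'$ and $u\in H_0^1(\Omega)$; this finiteness is what will force the integrability of $Tu$. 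The decisive pointwise fact is $T\psi_k(u)\ge\min(Tu,0)\ge -g^-$ a.e.: since $\psi_k(s)$ has the sign of $s$ and $|\psi_k(s)|\le|s|$, one may write $\psi_k(u)=\theta u$ with $\theta(x)\in[0,1]$, so $T\psi_k(u)=\theta\,(Tu)$; if $Tu\ge0$ this is $\ge0$, while if $Tu<0$ it is $\ge Tu\ge g\ge -g^-$. Thus $T\psi_k(u)$ is bounded below uniformly in $k$ by the fixed $L^1$ function $-g^-$, and $T\psi_k(u)\to Tu$ a.e.

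The first stage is the bounded case: for $v\in H_0^1(\Omega)\cap L^\infty(\Omega)$ with $Tv\ge g_0\in L^1(\Omega)$, I will show $Tv\in L^1(\Omega)$ and $\langle T,v\rangle=\int_\Omega Tv$. I first record the compactly supported case: if $w\in H_0^1(\Omega)\cap L^\infty(\Omega)$ has compact support in $\Omega$, mollification yields $w_\varepsilon\in C_c^\infty(\Omega)$ with $w_\varepsilon\to w$ in $H_0^1(\Omega)$ and a.e., $\|w_\varepsilon\|_\infty\le\|w\|_\infty$, and all supports in one fixed compact $K\Subset\Omega$; since $T$ and its $L^1_{\text{loc}}$-representative agree on $C_c^\infty(\Omega)$ one has $\langle T,w_\varepsilon\rangle=\int_\Omega Tw_\varepsilon$, and dominated convergence on $K$ (where $T\in L^1(K)$) gives $\langle T,w\rangle=\int_\Omega Tw$ with $Tw\in L^1(\Omega)$. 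For general bounded $v$ I cut off with $\chi_n\in C_c^\infty(\Omega)$, $0\le\chi_n\le1$, $\chi_n\nearrow1$, chosen so that $\chi_n v\to v$ in $H_0^1(\Omega)$; the compactly supported case applied to $\chi_n v$ gives $\langle T,\chi_n v\rangle=\int_\Omega\chi_n\,Tv$, whose left side converges to $\langle T,v\rangle$. As $\chi_n\in[0,1]$, the same sign computation gives $\chi_n Tv\ge -g_0^-$, so Fatou on $\chi_n Tv+g_0^-\ge0$ yields $\int_\Omega(Tv+g_0^-)\le\langle T,v\rangle+\int_\Omega g_0^-<\infty$, whence $Tv\in L^1(\Omega)$; then $|\chi_n Tv|\le|Tv|$ and dominated convergence upgrade this to $\langle T,v\rangle=\int_\Omega Tv$.

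The second stage lifts this to $u$. Applying the bounded case to each $v_k:=\psi_k(u)$ (legitimate, since $Tv_k\ge -g^-\in L^1$) gives $\langle T,v_k\rangle=\int_\Omega Tv_k$ with $Tv_k\in L^1(\Omega)$, and $\int_\Omega Tv_k=\langle T,v_k\rangle\to\langle T,u\rangle$. Fatou on $Tv_k+g^-\ge0$ then gives
\[
\int_\Omega(Tu+g^-)\le\langle T,u\rangle+\int_\Omega g^-<\infty,
\]
so $Tu\in L^1(\Omega)$. Finally $|Tv_k|=|T|\min(|u|,k)\le|Tu|\in L^1(\Omega)$, so dominated convergence gives $\int_\Omega Tv_k\to\int_\Omega Tu$ and therefore $\langle T,u\rangle=\int_\Omega Tu$, as desired.

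I expect the main obstacle to be the construction of the cutoffs $\chi_n$ with $\chi_n v\to v$ in $H_0^1(\Omega)$: the only delicate term is $\|v\,\nabla\chi_n\|_{L^2}$, and controlling it near $\partial\Omega$ is precisely a Hardy-inequality estimate, $\int_\Omega v^2/\mathrm{dist}(\cdot,\partial\Omega)^2\le C\int_\Omega|\nabla v|^2$, which holds on the bounded Lipschitz domains considered here and quantifies the vanishing of $v\in H_0^1(\Omega)$ at the boundary; for unbounded $\Omega$ one inserts in addition a cutoff at infinity, for which $\|v\,\nabla\chi_n\|_{L^2}\le (C/n)\|v\|_{L^2}\to0$. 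Everything else is bookkeeping around the two Fatou steps, whose whole purpose is that the hypothesis enters only as a uniform lower bound and never as two-sided control.
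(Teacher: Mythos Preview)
The paper does not give its own proof of this statement; it is quoted from \cite{brezisbrowder1}. The relevant comparison is with the Brezis--Browder approximation that the paper reproduces in its proof of Theorem~\ref{brezis browder tpye result}.

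Your two-stage scheme (first reduce to bounded $v\in H_0^1\cap L^\infty$, then handle the bounded case with spatial cutoffs $\chi_n$) differs from that construction, and the cutoff step has a genuine gap. The theorem is stated for an \emph{arbitrary} open set $\Omega\subseteq\R^N$, with no boundary regularity, yet the Hardy inequality you invoke to force $\chi_n v\to v$ in $H_0^1(\Omega)$ requires something like Lipschitz regularity of $\partial\Omega$; for irregular $\Omega$ it can fail, and then there is no control on $\|v\,\nabla\chi_n\|_{L^2}$. Your closing remark covers only bounded Lipschitz domains plus a cutoff at infinity; domains with rough finite boundary are left out. So as written the argument does not establish the theorem in the generality stated.

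The original Brezis--Browder device, which the paper reproduces, bypasses this entirely. With $\tilde v_n\in C_c^\infty(\Omega)$ converging to $u$ in $H_0^1$, set
\[
v_n \;=\; u\Bigl(u^2+\tfrac{1}{n^2}\Bigr)^{-1/2}\min\Bigl\{\bigl(u^2+\tfrac{1}{n^2}\bigr)^{1/2}-\tfrac1n,\ \bigl(\tilde v_n^2+\tfrac{1}{n^2}\bigr)^{1/2}-\tfrac1n\Bigr\}.
\]
One checks that $v_n=\lambda_n u$ with $0\le\lambda_n\le1$, that $|v_n|\le|\tilde v_n|$ (so $v_n\in H_0^1\cap L^\infty$ with compact support), and that $v_n\to u$ in $H_0^1(\Omega)$, all without any hypothesis on $\partial\Omega$. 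From here your compactly-supported case followed by your Fatou and dominated-convergence steps finishes the proof in a single pass. You could also insert this construction in place of $\chi_n v$ to repair your first stage, but then the second stage becomes redundant.
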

However, in our case, if we replace \( H_0^1(\Omega) \) with \( H^1(\Omega) \), the result no longer holds. Instead, we require the assumption that \( T \in L^1(\Omega) \), that is,
\[
D_s A(x,u) \nabla u \cdot \nabla u \in L^1(\Omega).
\]
This integrability follows from assumption~\eqref{a.1}, and below we provide a proof of Theorem \ref{brezis browder tpye result}

\begin{proof}[Proof of Theorem \ref{brezis browder tpye result}]
We observe that the result is obvious for $w\in H^1(\Omega)\cap L^\infty(\Omega)$, by Riesz's Theorem. Let $\{\tilde v_n\}\subset C^\infty(\overline\Omega)$ such that $v_n\to w$ in $H^1(\Omega)$ and we consider, as in \cite{brezisbrowder1}, \[v_n=w\bigg(w^2+\frac{1}{n^2}\bigg)^{-1/2}\min\bigg\{\bigg(w^2+\frac{1}{n^2}\bigg)^{1/2}-\frac1n,\bigg( \tilde v_n^2+\frac{1}{n^2}\bigg)^{1/2}-\frac 1n\bigg\}\in H^1(\Omega)\cap L^\infty(\Omega).\]
We point out that
\begin{itemize}
    \item $v_n(x)=w(x)\lambda_n(x)$ with $0\le\lambda_n\le1$ a.e. in $\Omega$,
    \item $v_n\to w$ in $H^1(\Omega)$,
    \item $|v_n|\le |w|$ a.e. in $\Omega$.
\end{itemize}
 We define 
 \[T(x):=D_sA(x,u)\nabla u\cdot\nabla u.\]
 Hence,
\[T(x)v_n(x)=\left(D_sA(x,u)\nabla u\cdot\nabla u\right)\lambda_n(x)w=\lambda_n(x)T(x)w(x),\]
and 
\[Tv_n=\lambda_nTw\ge-\lambda_n\left[\left(D_sA(x,u)\nabla u\cdot\nabla u\right)w\right]^-\ge-\left[\left(D_sA(x,u)\nabla u\cdot\nabla u\right)w\right]^-.\]
 According to Fatou's Lemma, we have that
\begin{align*}
    \int_{\Omega} \left(D_sA(x,u)\nabla u\cdot\nabla u\right)w&\le\liminf_{n\to+\infty}\int_{\Omega}\left(D_sA(x,u)\nabla u\cdot\nabla u\right)v_n\\
    &=2\liminf_{n\to+\infty}\bigg\{\int_\Omega A(x,u)\nabla u\cdot\nabla v_n-\langle\omega,v_n\rangle\bigg\}\le C,
\end{align*}
for some $C>0$.
   Hence $\left(D_sA(x,u)\nabla u\cdot\nabla u\right)w\in L^1(\Omega)$. Testing \eqref{weak formulat thm3.1} with $v_n$, we can pass to the limit according to Lebesgue's Theorem and we obtain:
     \[\int_\Omega A(x,u)\nabla u\cdot\nabla w+\frac12\int_\Omega\left( D_sA(x,u)\nabla u\cdot\nabla u\right)w=\langle\omega,w\rangle.\qedhere\]
\end{proof}
\begin{remark}\label{density remark}
   Theorem \ref{brezis browder tpye result} ensures that every $v\in H^1(\Omega)$ such that
     \begin{equation}\label{condition test function}
     \bigg[\left(D_sA(x,u)\nabla u\cdot\nabla u\right) v\bigg]^{-}\in L^1(\Omega)
     \end{equation}
     is an admissible test function for \eqref{P}. In particular, the condition \eqref{condition test function} is satisfied for $v\in H^1(\Omega)\cap L^\infty(\Omega)$ and also  for $v\equiv u$ by \eqref{a.3}.
\end{remark}

We state a regularity result:
\begin{theorem}\label{regularity}
    Let $\lambda\in\R,$ 
    and let $u\in H^1(\Omega)$ be a weak solution of \eqref{P}.
   Assume also that \eqref{g.1}  holds with $r>\frac{N}{2}$. Then $u\in L^\infty(\Omega)$.
\end{theorem}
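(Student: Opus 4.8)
\textbf{Proof plan for Theorem \ref{regularity}.}
The strategy is the classical Stampacchia truncation method (De Giorgi–Stampacchia iteration), adapted to the Neumann setting and to the quasilinear term. The key observation is that the problematic quantity $D_sA(x,u)\nabla u\cdot\nabla u$ is always sign-controlled on the level sets we use as test functions, once $|u|\ge R$, thanks to \eqref{a.3}; on $\{|u|<R\}$ it is merely an $L^1$ function multiplied by a bounded cutoff, hence harmless.

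First I would fix $k\ge R$ and use as test function $v_k=(u-k)^+$ (and symmetrically $(u+k)^-$, so it suffices to treat the positive part). By Remark \ref{density remark}, $v_k$ is admissible: indeed $(D_sA(x,u)\nabla u\cdot\nabla u)v_k$ is nonnegative on $\{u\ge k\}\subset\{|u|\ge R\}$ by \eqref{a.3}, so its negative part vanishes and Theorem \ref{brezis browder tpye result} applies. Testing the weak formulation of \eqref{P} with $v_k$, the gradient term gives $\int_{A_k}A(x,u)\nabla u\cdot\nabla u\ge\nu\int_{A_k}|\nabla v_k|^2$ by \eqref{a.2}, where $A_k=\{u>k\}$; the quasilinear term $\frac12\int(D_sA(x,u)\nabla u\cdot\nabla u)v_k$ is $\ge0$ and can simply be dropped; and the right-hand side is $\int_{A_k}(g(x,u)-\lambda u)v_k$. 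Using \eqref{g.1} and $|u|\le|v_k|+k$ on $A_k$, this is bounded by $\int_{A_k}(a(x)+C(1+|v_k|^{p-1}))v_k$ for suitable constants. So one gets
\[
\nu\int_\Omega|\nabla v_k|^2\le \int_{A_k}\bigl(a(x)+c_1+c_2|v_k|^{p-1}\bigr)v_k.
\]

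Next I would run the iteration. Set $\psi(k)=\|v_k\|_{L^{2^*}}$ or work directly with $|A_k|$ and $\int_{A_k}v_k$. Using Sobolev's inequality $\|v_k\|_{L^{2^*}}^2\le C_S(\|\nabla v_k\|_2^2+\|v_k\|_2^2)$ — here the $L^2$ term must be carried because of the Neumann boundary; one absorbs it by noting $\|v_k\|_2\le\|v_k\|_{2^*}|A_k|^{1/N}$ and $|A_k|\to0$, or more cleanly by first fixing $k$ large so that $|A_k|$ is small enough that the $L^2$ contribution is absorbed into the left side. The right-hand side terms are handled by Hölder: $\int_{A_k}a(x)v_k\le\|a\|_r\|v_k\|_{2^*}|A_k|^{1-1/r-1/2^*}$, which requires exactly $r>N/2$ so that the exponent $1-\frac1r-\frac1{2^*}$ is positive; and $\int_{A_k}|v_k|^p\le\|v_k\|_{2^*}^p|A_k|^{1-p/2^*}$, with $1-p/2^*>0$ since $p<2^*$. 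Combining, one obtains an inequality of the form $\psi(k)\le C\psi(k)^{p-1}|A_k|^{\alpha}+C\psi(k)|A_k|^\beta+\dots$ with positive exponents $\alpha,\beta$; choosing $k_0$ large enough that the superlinear and borderline terms are absorbed gives, for $h>k\ge k_0$, the decay estimate $\int_{A_h}(u-h)\le \frac{C}{(h-k)^{\mu}}\bigl(\int_{A_k}(u-k)\bigr)^{1+\varepsilon}$ (or the analogous one for $|A_k|$), with $1+\varepsilon>1$. Stampacchia's lemma (\cite[Lemma B.1]{nonsmooththeory1} or the classical reference) then yields $u\le k_0+d$ for an explicit $d$, i.e. $u$ is bounded above; the same argument on $-u$ bounds it below.

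The main obstacle I anticipate is the Neumann boundary condition: unlike the Dirichlet case one does not have Poincaré's inequality on $H^1(\Omega)$, so the bare gradient bound $\nu\|\nabla v_k\|_2^2\le(\text{RHS})$ does not immediately feed into Sobolev. The remedy, as sketched, is to retain the full $H^1$ Sobolev inequality and absorb the extra $\|v_k\|_2^2$ term using the smallness of $|A_k|$ for $k$ large (which is automatic since $v_k\in L^{2^*}\subset L^2$ and $|\{u>k\}|\to 0$), at the cost of starting the iteration from a sufficiently large threshold $k_0\ge R$ rather than from $0$. A secondary, more bookkeeping-level point is verifying admissibility of the truncated test functions via Theorem \ref{brezis browder tpye result}, but as noted this is immediate because the sign condition \eqref{a.3} makes the relevant negative part vanish on the support of $v_k$.
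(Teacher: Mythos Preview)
Your proposal is correct and leads to the result, but it follows a genuinely different route from the paper. The paper proceeds in two stages: it first runs a Moser-type iteration with test functions $u\min\{|u|^{2s},L^2\}\varphi^2$ to obtain $u\in L^t(\Omega)$ for every finite $t$; only then, with $g(x,u)-\lambda u$ now lying in $L^m$ for some $m>N/2$, does it carry out a clean Stampacchia level-set argument (testing with the shifted truncation $\xi(u)=(u-R)^+-(-u-R)^+$) and invoke \cite[Lemma~4.1]{stampacchia1965probleme}. You instead go directly to Stampacchia on the truncations $(u-k)^+$, handling the superlinear contribution $\int_{A_k}|v_k|^p$ by absorption, which works because $p<2^*$ strictly so that both $\|v_k\|_{2^*}^{p-2}$ and $|A_k|^{1-p/2^*}$ vanish as $k\to\infty$. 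Your route is shorter and avoids the Moser machinery, at the price of two bookkeeping points you gloss over: the constant multiplying $\int_{A_k}v_k$ carries a hidden factor $k^{p-1}$ (from $|u|^{p-1}\le C(|v_k|^{p-1}+k^{p-1})$), so the recursion constant depends on the terminal level $k_0+d$ and one needs a small extra argument to close the iteration; and, a minor slip, positivity of $1-\tfrac1r-\tfrac1{2^*}$ requires only $r>(2^*)'$, whereas $r>N/2$ is what forces $(1-\tfrac1r-\tfrac1{2^*})\cdot 2^*>1$, the actual requirement for Stampacchia's lemma to apply. The paper's two-step approach sidesteps both issues by first upgrading the integrability of the right-hand side.
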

\begin{proof} We follow
\cite[Lemma 2.2]{saldana2022least}.
    Let $\varphi\in C^\infty(\overline\Omega)$ with $\text{supp}(\varphi)\subset\{x\in\Omega\ :\ |u(x)|\ge R\}$  and let $s,L\ge0$.
  We consider $v:=u\min\{|u|^{2s},L^2\}\varphi^2\in H^1(\Omega)$. 
  By \eqref{a.3}, we can test \eqref{P} with $v$ and obtain:
\[\int_\Omega A(x,u)\nabla u\cdot\nabla v\le\int_\Omega (g(x,u)-\lambda u)v.\]
We start treating the left hand side:
\begin{align*}
        \int_\Omega A(x,u)\nabla u\cdot\nabla v&=\int_\Omega \left(A(x,u)\nabla u\cdot\nabla u\right)\left(\min\{|u|^s,L\}\varphi\right)^2\\
        &\quad+2\int_\Omega A(x,u)\nabla u\cdot\nabla(\min\{|u|^s,L\}\varphi)\left(\min\{|u|^s,L\}\varphi\right)u\\
        &=\int_\Omega A(x,u)\nabla u\cdot\nabla u\left(\min\{|u|^s,L\}\varphi\right)^2\\
        &\quad+2\int_\Omega \left(A(x,u)\nabla u\cdot\nabla\varphi\right)\left(\min\{|u|^s,L\}\right)^2u\varphi\\
        &\quad +2s\int_{\{L\ge|u|^s\}}A(x,u)\nabla u\cdot\nabla u |u|^{2s}\varphi^2.
    \end{align*}
    We set $\psi:=u\min\{|u|^s,L\}\varphi$ and we compute $|\nabla\psi|_2^2$:
    \begin{align*}
        \nu\int_\Omega |\nabla \psi|^2&\le\int_\Omega A(x,u)\nabla\psi\cdot\nabla\psi=\int_\Omega A(x,u) \nabla u\cdot\nabla u\left(\min\{|u|^s,L\}\varphi\right)^2\\
        &\quad+2\int_\Omega A(x,u)\nabla u\cdot\nabla(\min\{|u|^s,L\}\varphi) u\min\{|u|^s,L\}\varphi\\
        &\quad+\int_\Omega A(x,u)\nabla(\min\{|u|^s,L\}\varphi)\cdot \nabla(\min\{|u|^s,L\}\varphi)u^2\\
        &=\int_\Omega A(x,u)\nabla u\cdot\nabla u (\min\{|u|^s,L\}\varphi)^2 +2\int_\Omega A(x,u)\nabla u\cdot\nabla\varphi(\min\{|u|^s,L\})^2u\varphi\\
        &\quad+(2s+s^2)\int_{\{L\ge|u|^s\}}A(x,u)\nabla u\cdot\nabla u|u|^{2s}\varphi^2+\int_\Omega A(x,u)\nabla\varphi\cdot\nabla\varphi(\min\{|u|^s,L\}\varphi)^2u^2\\
        &\quad+2s\int_{\{L\ge|u|^s\}} A(x,u)\nabla u\cdot\nabla\varphi |u|^{2s}\varphi u\le C_1(s)\int_\Omega A(x,u)\nabla u\cdot\nabla v+c|u|_{2s+2}^{2s+2}\\
        &\le C_1(s)\left[\int_\Omega (g(x,u)-\lambda u)v+c|u|_{2s+2}^{2s+2}\right],
    \end{align*}
    where $C_1(s)=\max\left\{1,\frac{s+2}{2},s+1\right\}$.
    Exploiting \eqref{g.1}, we have that:
    \begin{align*}
        \int_\Omega (g(x,u)-\lambda u)v&\le\int_\Omega a(x)\varphi+b\int_\Omega |u|^{p-1}v-\lambda\int_\Omega uv\\
        &=\int_\Omega a(x)u(\min\{|u|^s,L\}\varphi)^2+b\int_\Omega |u|^p(\min\{|u|^s,L\}\varphi)^2\\
        &\quad-\lambda\int_\Omega u^2(\min\{|u|^s,L\}\varphi)^2\\
        &\le C_3+\int_\Omega (|a(x)|+|\lambda|)u^2(\min\{|u|^s,L\}\varphi)^2+\tilde b\int_{\{|u|>R+1\}} |u|^{2s+p}.
    \end{align*}
  We set $\alpha(x):=a(x)+|\lambda|$. Hence, 

 \begin{align*}
       \int_\Omega|\alpha(x)|\psi^2&\le K\int_\Omega |u|^{2s+2}\varphi^2+\int_{\{|\alpha(x)|\ge K\}}\alpha u^2\min\{|u|^{2s},L^2\}\varphi^2 \\
        &\le C_2K|u|_{2s+2}^{2s+2}+\left(\int_{\{|\alpha(x)|\ge K\}}|\alpha(x)|^{\frac N2}\right)^{\frac 2N}\cdot\left(\int_\Omega |u\min\{|u|^s,L\}\varphi|^{\frac{2N}{N-2}}\right)^{\frac{N-2}{N}}\\
        &\le C_2K|u|_{2s+2}^{2s+2}+\delta(K)\left(\int_\Omega|\psi|^{\frac{2N}{N-2}}\right)^{\frac{N-2}{N}},\quad \text{with $\delta(K)\to0$ a $K\to+\infty$}.
    \end{align*}
   Now, as in  \cite[page 30, proof of Lemma 2.2]{saldana2022least} and since $p>2$, we get:
\begin{align*}
    \left(\int_\Omega|\psi|^{\frac{2N}{N-2}}\right)^{\frac{N-2}{N}}&\le k_1\left(\int_\Omega\left|\psi-\int_\Omega\psi\right|^{\frac{2N}{N-2}}\right)^{\frac{N-2}{N}}+k_2\left(\int_\Omega|\psi|\right)^2\\
    &\le k_3\int_\Omega|\nabla \psi|^2+k_4\left(\int_\Omega|u|^{s+1}\right)^2\\
    &\le C_1(s)C_3+C_1(s)C_2K|u|_{2s+p}^{2s+p}+\delta(K)|\psi|_{2^*}^2+\tilde b|u|_{2s+p}^{2s+p}.
\end{align*}
Thus, 
\[|\psi|_{2^*}^2\le \kappa(s)|u|_{2s+p}^{2s+p},\]

    where $\kappa$ is independent on $L$. Then, passing to the limit as $L\to+\infty$ we conclude that $u\in L^{\frac{2(s+1)N}{N-2}}(\Omega)$ for every $s\ge0$ and we obtain $u\in L^t(\Omega)$ for every $t\ne\infty$.
 
       In particular, there exists $m>\frac{N}{2}$ such that
    \[
    h(x,u):=g(x,u)-\lambda u+\xi(u)\in L^m(\Omega).
    \]
    We define 
    \[
    \xi(u)=
    \begin{cases}
        u+R&\text{if $ u(x)<-R$}\\
        0&\text{if $|u(x)|\le R$}\\
        u-R&\text{if $u(x)>R$}.
    \end{cases}
    \]
    Thus,
\begin{align*}
    \int_\Omega A(x,u)\nabla u\cdot\nabla\xi(u)+\int_\Omega |\xi(u)|^2+\frac12\int_\Omega \xi(u)D_sA(x,u)\nabla u\cdot\nabla u=\int_\Omega h(x,u)\xi(u),
\end{align*}
with
\[\int_\Omega A(x,u)\nabla u\cdot\nabla\xi(u)=\int_{\{|u|\ge R\}}A(x,u)\nabla u\cdot\nabla u,\qquad \int_\Omega \xi(u)D_sA(x,u)\nabla u\cdot\nabla u\ge0.\]
Therefore, 
    \begin{align*}
       \nu\left(\int_\Omega|\xi(u)|^{\frac{2N}{N-2}}\right)^{\frac{N-2}{N}}&\le \nu C\int_{\{|u|>R\}}(|\nabla u|^2+|\xi(u)|^2)= \nu C\int_\Omega(|\nabla \xi(u)|^2+|\xi(u)|^2)\\ 
       &\le C\left[\int_\Omega A(x,u)\nabla u\cdot\nabla \xi(u)+\int_\Omega |\xi(u)|^2+\int_{\Omega}\xi(u)D_sA(x,u)\nabla u\cdot\nabla u\right]\\
       &\le C\int_\Omega h(x,u)\xi(u)\le C\left(\int_{\{|u|>R\}}|h(x,u)|^{\frac{2N}{N+2}}\right)^{\frac{N+2}{2N}}\cdot\left(\int_{\Omega}|\xi(u)|^{\frac{2N}{N-2}}\right)^{\frac{N-2}{2N}}.
    \end{align*}
Hence,
\begin{align*}
\nu\left(\int_\Omega|\xi(u)|^{\frac{2N}{N-2}}\right)^{\frac{N-2}{2N}}&\le C\left(\int_{\{|u|>R\}}|h(x,u)|^{\frac{2N}{N+2}}\right)^{\frac{N+2}{2N}}\\
&\le C \left(\int_{\{|u|>R\}}|h(x,u)|^{m}\right)^{\frac{1}{m}}|\{|u|>R\}|^{\frac{N+2}{2N}-\frac1m}.
\end{align*}
On the other hand, for every $M>R$ we have that
\begin{align*}
    \left(\int_\Omega|\xi(u)|^{\frac{2N}{N-2}}\right)^{\frac{N-2}{2N}}\ge \left(\int_{\{|u|>M\}}|\xi(u)|^{\frac{2N}{N-2}}\right)^{\frac{N-2}{2N}}\ge (M-R)|\{|u|>M\}|^{\frac{N-2}{2N}}.
\end{align*}
Therefore,
\[|\{|u|>M\}|\le\frac{C_\nu}{M-R}|\{|u|>R\}|^{\gamma},\quad\gamma=\frac{2N}{N-2}\left(\frac{N+2}{2N}-\frac{1}{m}\right)>1.\]
According to \cite[Lemma 4.1]{stampacchia1965probleme}, there exists $K>0$ such that $|\{|u|>M\}|=0$ for every $M\ge K$ and this implies that $u\in L^\infty(\Omega)$.

\end{proof}

We stress the fact that the hypothesis \eqref{a.3} is essential to obtain this regularity. We present the example in \cite[Introduction, page 5]{nonsmooththeory1}.
\begin{example}
Let $N\ge3$, let $\Omega=B_1\subset\R^N$ be a ball centered at $x=0$ with radius $r=1$,  and let $A(x,s)=a(x,s)\text{Id}$ with
\[a(x,s)=\left(1+\frac{1}{|x|^{\alpha}e^s+1}\right),\quad \alpha=12(N-2),\]
where Id is the identity matrix. Then $u(x)=-\alpha\log|x|\in H^1(B_1)$ is a weak solution of
\[
\begin{cases}
-\text{div}(A(x,u)\nabla u)+\frac12 D_sA(x,u)\nabla u\cdot\nabla u=0 &\text{in $B_1$},\\
A(x,u)\nabla u\cdot\nabla\eta=-\frac32\alpha&\text{on $\partial B_1$},
\end{cases}
\]
but 
\[D_sa(x,s)=-\frac{|x|^\alpha e^s}{(|x|^\alpha e^s+1)^2}\le0,\]
and $u\notin L^\infty(B_1)$. Indeed, $A(x,u)=\frac32\text{Id}$, $\nabla u=-\alpha\frac{x}{|x|^2}$, $\eta=\frac{x}{|x|}$ and
\begin{align*}
-\text{div}(A(x,u)\nabla u)+\frac12 D_sA(x,u)\nabla u\cdot\nabla u&=-\frac32\Delta u-\frac{1}{8}|\nabla u|^2=0
\end{align*}
in the weak sense, but $u\in H^1(B_1)$ has a singularity  at $x=0$. Thus, $u$ is a singular weak solution of a quasilinear elliptic problem where data, boundary conditions and domain are $C^\infty$. Notice also that
\begin{align*}
\int_{B_1}|\log|x||^q\,dx&=\left(\int_{\partial B_1}\,d\sigma\right)\left(\int_0^1|\log(r)|^qr^{N-1}\,dr\right)\\
&=\omega_{N-1}\int_0^{+\infty}\rho^qe^{-(N-1)\rho}e^{-\rho}\,d\rho=\omega_{N-1}\int_0^{+\infty}\rho^qe^{-N\rho}\,d\rho\\
&=\frac{\omega_{N-1}}{N^{q+1}}\Gamma(q+1),
\end{align*}
where $\omega_{N-1}$ is the area of the $(N-1)$-sphere and $\Gamma$ is the Gamma function.
Therefore, $u\in L^q(B_1)$ for every $q\ne\infty$, but $u\notin L^\infty(B_1)$.
\end{example}

\section{The Palais-Smale condition}
\begin{theorem}\label{PS condition}
Let $\{u_n\}\subset H^1(\Omega)$ be a bounded sequence  such that the sequence $\{\omega_n\}$, where
\[
\omega_n:=-div(A(x,u_n)\nabla u_n)+\frac12D_sA(x,u_n)\nabla u_n\cdot\nabla u_n,
\]
is strongly convergent in $(H^1(\Omega))'$.  Then $\{u_n\}$ admits a strongly convergent subsequence in $H^1$ to some $u\in H^1(\Omega)$.
\end{theorem}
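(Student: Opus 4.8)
The plan is to run the Boccardo--Murat argument for quasilinear functionals with a quadratic gradient term, inside the non-smooth framework of Section~2; by \eqref{a.1} one has $D_sA(x,u_n)\nabla u_n\cdot\nabla u_n\in L^1(\Omega)$, so Theorem~\ref{brezis browder tpye result} makes every $w\in H^1(\Omega)$ with $[(D_sA(x,u_n)\nabla u_n\cdot\nabla u_n)w]^-\in L^1(\Omega)$ --- in particular every $w\in H^1(\Omega)\cap L^\infty(\Omega)$, and $w\equiv u_n$ by \eqref{a.3} --- an admissible test function for the $n$-th equation. Since $\{u_n\}$ is bounded in $H^1(\Omega)$ I pass to a subsequence with $u_n\rightharpoonup u$ in $H^1(\Omega)$, $u_n\to u$ in $L^q(\Omega)$ for $q<2^*$ and a.e.\ in $\Omega$, and write $\omega_n\to\omega$ in $(H^1(\Omega))'$. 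As $u_n\to u$ already in $L^2(\Omega)$, the coercivity \eqref{a.2} reduces the assertion to $\int_\Omega A(x,u_n)\nabla(u_n-u)\cdot\nabla(u_n-u)\to 0$; expanding the square and using $A(x,u_n)\nabla u\to A(x,u)\nabla u$ in $L^2(\Omega)$ (dominated convergence, \eqref{a.1}) together with $\nabla u_n\rightharpoonup\nabla u$, this is equivalent to the energy convergence $\int_\Omega A(x,u_n)\nabla u_n\cdot\nabla u_n\to\int_\Omega A(x,u)\nabla u\cdot\nabla u$, and that follows once the gradients converge a.e.\ (passing to the limit in the $n$-th equation tested with $u_n$ and using \eqref{a.3}--\eqref{a.4} to control the sign-indefinite term $\int_\Omega(D_sA(x,u_n)\nabla u_n\cdot\nabla u_n)u_n$).

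The core step --- and the one I expect to be the real obstacle --- is the a.e.\ convergence $\nabla u_n\to\nabla u$, the difficulty being that $\tfrac12 D_sA(x,u_n)\nabla u_n\cdot\nabla u_n$ is bounded only in $L^1(\Omega)$, so a priori the gradient energy could concentrate. I fix $\gamma=\gamma(N,C,\nu)$ so large that, with $\Phi(s):=s\,e^{\gamma s^2}$, one has $\nu\Phi'(s)-\tfrac{NC}{2}|\Phi(s)|\ge\tfrac\nu2$ for all $s$, and for each $k>0$ test the $n$-th equation with the admissible function $v_n:=\Phi(T_k(u_n-u))\in H^1(\Omega)\cap L^\infty(\Omega)$, $T_k$ being truncation at height $k$. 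Writing $\nabla u_n=\nabla(u_n-u)+\nabla u$ and using \eqref{a.1}, the a.e.\ convergences of $A(x,u_n)$ and $D_sA(x,u_n)$, the weak convergence $\nabla(u_n-u)\rightharpoonup 0$, dominated convergence (which kills every term linear in $\nabla(u_n-u)$ and every term containing only $\nabla u$), and $\langle\omega_n,v_n\rangle\to 0$ (since $\|v_n\|_{H^1}$ is bounded, $v_n\rightharpoonup 0$, and $\omega_n\to\omega$ strongly), the choice of $\gamma$ lets \eqref{a.2} absorb the quadratic term on $\{|u_n-u|<k\}$ and yields
\[
\tfrac\nu2\int_{\{|u_n-u|<k\}}|\nabla(u_n-u)|^2\le-\tfrac{k\,e^{\gamma k^2}}{2}\Big(\int_{\{u_n-u\ge k\}}-\int_{\{u_n-u\le-k\}}\Big)D_sA(x,u_n)\nabla u_n\cdot\nabla u_n+o(1).
\]
On $\{|u_n-u|\ge k\}$ I split according to $\{|u_n|\ge R\}$: by \eqref{a.3} the parts where $|u_n|\ge R$ have a favourable sign and are discarded, while those where $|u_n|<R$ sit in $\{|u|\ge k-R\}$, on which \eqref{a.1} and \eqref{a.4} give $|D_sA(x,u_n)\nabla u_n\cdot\nabla u_n|\le C_0|\nabla u_n|^2$. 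The remaining delicate point --- a ``no concentration of the gradient energy at large deviations'' estimate --- is to show this contribution becomes negligible after $n\to\infty$ and then $k\to\infty$; I would obtain it from a complementary estimate, testing the equations with a truncation of $(u_n-u)-T_k(u_n-u)$ and exploiting that the equation itself forces $\tfrac12 D_sA(x,u_n)\nabla u_n\cdot\nabla u_n=\omega_n+\operatorname{div}(A(x,u_n)\nabla u_n)$ to be bounded in $(H^1(\Omega))'$. Once this is in hand, $\int_{\{|u_n-u|<k\}}|\nabla(u_n-u)|^2\to 0$ for every $k$, hence $\nabla u_n\to\nabla u$ in measure and, along a subsequence, a.e.; combining with the reduction of the first paragraph then yields $u_n\to u$ in $H^1(\Omega)$.
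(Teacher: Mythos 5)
Your skeleton (bounded sequence, weak limit $u$, a.e.\ convergence of gradients, energy convergence, coercivity \eqref{a.2}) is the standard route and matches the paper's in outline, but the step you yourself flag as ``the remaining delicate point'' is precisely where the whole difficulty of the Boccardo--Murat argument lives, and you do not close it: the contribution of $\{|u_n-u|\ge k\}$ carries the prefactor $k e^{\gamma k^2}$ against an integrand of size $|\nabla u_n|^2$, which is bounded in $L^1$ but not known to be equi-integrable, so sending $n\to\infty$ and then $k\to\infty$ does not make it vanish without a genuinely new estimate. The paper does not attempt any of this: it simply imports the a.e.\ convergence of the gradients from \cite{boccardo92Gradientconvergence} in one line. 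Your sign bookkeeping in that step is also off: on $\{u_n-u\ge k\}\cap\{u_n\le -R\}$ hypothesis \eqref{a.3} gives $D_sA(x,u_n)\nabla u_n\cdot\nabla u_n\le 0$, so after multiplication by $-\tfrac{k}{2}e^{\gamma k^2}<0$ that piece is nonnegative and cannot be discarded; only the sets where $u_n$ and $u_n-u$ have the same sign are favourable.

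The reduction in your first paragraph is also too quick. Testing the $n$-th equation with $u_n$ gives $\int_\Omega A(x,u_n)\nabla u_n\cdot\nabla u_n+\tfrac12\int_\Omega (D_sA(x,u_n)\nabla u_n\cdot\nabla u_n)\,u_n=\langle\omega_n,u_n\rangle$, and even granting a.e.\ convergence of the gradients you cannot pass to the limit in the middle term: on $\{|u_n|<R\}$ it is sign-indefinite and dominated only by $\tfrac{NCR}{2}|\nabla u_n|^2$, so neither dominated convergence nor Fatou applies to the combination as written, and \eqref{a.4} gives no help there. The paper's Step~2 resolves exactly this by testing with $u_ne^{\zeta(u_n)}$, with $\zeta$ built from the constant $M$ of \eqref{cond 3.1} so that the full integrand $\bigl(A\zeta'+\tfrac12 D_sA\bigr)\nabla u_n\cdot\nabla u_n\,u_ne^{\zeta(u_n)}$ is pointwise nonnegative and Fatou yields the $\limsup$ inequality. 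Moreover, to use $\langle\omega,u\rangle$ you must first know that $u$ solves the limit equation and that $u$ is an admissible test function in it (the paper's Step~1, via the test functions $\varphi e^{-M(u_n+R)^+}$, $\varphi e^{-M(u_n-R)^-}$, Fatou, and Theorem~\ref{brezis browder tpye result}); your proposal omits this identification entirely. As it stands the argument has a genuine gap at its core step and glosses over two further points that the paper treats as the main content of the proof.
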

 \begin{proof}
The proof is similar to \cite[Lemma 3.4]{caninoserdica} with some minor modifications, but we provide it for completeness.
Let $\omega$ be the limit in $(H^1(\Omega))'$ of $\{\omega_n\}$. Since $\{u_n\}$ is bounded in $H^1(\Omega)$, we can assume (up to a subsequence) that $u_n\wto u$ in $H^1$ and
\begin{align*}
&u_n\to u\ \ \text{in $L^2(\Omega)$},&u_n(x)\to u(x)\ \ \text{a.e. $x\in\Omega$.}\\
\end{align*}
We point out that we can also assume that $\nabla u_n(x)\to\nabla u(x)$ for a.e. $x\in\Omega$ by \cite[Remark 2.1]{boccardo92Gradientconvergence}. 

\textbf{Step 1}: \hspace{1cm}$\displaystyle{\langle \omega,u\rangle=\int_{\Omega}A(x,u)\nabla u\cdot\nabla u+\frac12\int_{\Omega}\left(D_sA(x,u)\nabla u\cdot\nabla u\right) u}$.

We consider the test function \[v_n=\varphi e^{-M(u_n+R)^+},\]
where $\varphi\in H^1(\Omega)\cap L^\infty(\Omega)$ with $\varphi\ge0$ and
let $M>0$ be such that
\begin{equation}\label{cond 3.1}
\frac12\left|D_sA(x,s)\xi\cdot\xi\right|\le 
MA(x,s)\xi\cdot\xi,
\end{equation}
for every $s\in\R$ and for every $\xi\in\R^N$ (existence of such constant follows by \eqref{a.1}-\eqref{a.2}).

Hence, $v_n\in H^1(\Omega)\cap L^\infty(\Omega)$ is an admissible test function (according to Remark \ref{density remark}), and
$$\int_{\Omega}A(x,u_n)\nabla u_n\cdot\nabla\left(\varphi e^{-M(u_n+R)^+}\right)+\frac12\int_{\Omega}\left(D_s A(x,u_n)\nabla u_n\cdot \nabla u_n\right)\varphi e^{-M(u_n+R)^+}=\langle \omega_n,v_n\rangle.$$
By the product derivation rule, the quantity to the left hand side becomes:
\begin{align*}
\int_{\Omega}A(x,u_n)\nabla u_n\cdot\nabla(\varphi) e^{-M(u_n+R)^+}&+\int_{\Omega}\biggl[\frac12D_sA(x,u_n)\nabla u_n\cdot\nabla u_n\\
&-M A(x,u_n)\nabla u_n\cdot\nabla (u_n+R)^+\biggr]v_n.
\end{align*}
From \eqref{a.3} and the inequality
\begin{equation}
\label{revised 3.1}
\frac12D_sA(x,s)\xi\cdot\xi\le\frac12|D_sA(x,s)\xi\cdot\xi|\le M A(x,s)\xi\cdot\xi,
\end{equation}
we obtain that
\begin{equation}
\label{segno integrale 1}
\frac12D_sA(x,u_n)\nabla u_n\cdot\nabla u_n-M A(x,u_n)\nabla u_n\cdot\nabla (u_n+R)^+\le 0.
\end{equation}
Indeed, if $u_n+R\le0$ we have that $\nabla (u_n+R)^+=0$ and  $D_sA(x,u_n)\nabla u_n\cdot\nabla u_n\le0$ by \eqref{a.3}, hence \eqref{segno integrale 1} holds.
 Otherwise $\nabla (u_n+R)^+=\nabla u_n$, and \eqref{segno integrale 1} follows by \eqref{revised 3.1}. 
 
 According to Fatou's Lemma, we get:
\begin{align*}
\int_{\Omega}\limsup_{n\to+\infty}\biggl\{\left(A(x,u_n)\nabla u_n\cdot\nabla\varphi\right)e^{-M(u_n+R)^+}&+\biggl[\frac12D_sA(x,u_n)\nabla u_n\cdot\nabla u_n\\
&-M A(x,u_n)\nabla u_n\cdot\nabla (u_n+R)^+\biggr]v_n\biggl\}\\
&\quad\ge\langle \omega,v_n\rangle.
\end{align*}
Now, let
$$\varphi_k=\varphi H\left(\frac{u}{k}\right)e^{M(u+R)^+},$$
where $\varphi\in C^{\infty}(\overline\Omega), \varphi\ge0$, $H\in C^1(\R)$ with $0\le H\le 1$, $H(s)=1$ if $s\in[-\frac12,\frac12]$ and $H(s)=0$ in $(-1,1)^c$. Then $\varphi_k\in H^1(\Omega)\cap L^{\infty}(\Omega)$ and $\varphi_k\ge0$. Therefore:
\begin{align*}
\int_{\Omega}A(x,u)\nabla u\cdot\nabla\left(\varphi H\left(\frac{u}{k}\right)\right)+\frac12\int_{\Omega}\left(D_sA(x,u)\nabla u\cdot\nabla u\right)\varphi H\left(\frac{u}{k}\right)\ge\left\langle \omega,\varphi H\left(\frac{u}{k}\right)\right\rangle.
\end{align*}
Taking the limit as $k\to+\infty$:
$$\int_{\Omega}A(x,u)\nabla u\cdot\nabla\varphi+\frac12\int_{\Omega}\left(D_sA(x,u)\nabla u\cdot\nabla u\right)\varphi\ge\langle \omega,\varphi\rangle.$$
Considering $\tilde v_n=\varphi e^{-M(u_n-R)^-}$, we obtain the opposite inequality:
$$\frac12D_sA(x,u_n)\nabla u_n\cdot \nabla u_n-M A(x,u_n)\nabla u_n\cdot\nabla (u_n-R)^-\ge0.$$
Then 
\begin{equation}\label{eq omega}
    \int_{\Omega}A(x,u)\nabla u\cdot\nabla\varphi+\frac12\int_{\Omega}\left(D_sA(x,u)\nabla u\cdot\nabla u\right)\varphi=\langle \omega,\varphi\rangle,
\end{equation}
for each $\varphi\in C^{\infty}(\overline\Omega)$ with  $\varphi\ge0$.

 By linearity, the identity also holds for every test function $\varphi\in C^\infty(\overline\Omega)$. Indeed, the density of \( C^\infty(\overline\Omega) \) in \( H^1(\Omega) \cap L^\infty(\Omega) \) implies that \eqref{eq omega} holds for every \( \varphi \in H^1(\Omega) \cap L^\infty(\Omega) \) with \( \varphi \geq 0 \) (see also Remark \ref{density remark}). Taking \( -\varphi \), we obtain \eqref{eq omega} also for non-positive test functions. Now, let \( \varphi \in C^\infty(\overline\Omega) \) (possibly sign-changing). Then, we can write \( \varphi = \varphi^+ - \varphi^- \), where \( \varphi^+ \geq 0 \) and \( -\varphi^- \leq 0 \). By linearity, we infer that \eqref{eq omega} also holds for any \( \varphi \in C^\infty(\overline\Omega) \).
 Finally, according to  Remark \ref{density remark}, we get also \eqref{eq omega} for $\varphi=u$.\\

\textbf{Step 2}: \hspace{1cm}$\displaystyle{\limsup_{n\to+\infty}\int_{\Omega}e^{\zeta(u_n)}A(x,u_n)\nabla u_n\cdot\nabla u_n\le\int_{\Omega}e^{\zeta(u)}A(x,u)\nabla u\cdot\nabla u}$, where
\[
\zeta(s)=
\begin{cases}
M s&0<s<R\\
M R&s\ge R\\
-M s&-R<s<0\\
M R&s\le-R.
\end{cases}
\]
We take $\tilde v_n=u_ne^{\zeta(u_n)}$ and the quantity
$$\left[\left(D_sA(x,u_n)\nabla u_n\cdot\nabla u_n\right)u_ne^{\zeta(u_n)}\right]^{-}.$$
If $|u_n|\ge R$ this negative part is zero, while $|u_n|<R$, $\zeta(u_n)=|u_n|\le M R$. Therefore,
$$ e^{\zeta(u_n)}\le e^{M R}\implies \left(D_sA(x,u_n)\nabla u_n\cdot\nabla u_n\right) u_ne^{\zeta(u_n)}<\tilde cD_sA(x,u_n)\nabla u_n\cdot\nabla u_n\in L^1(\Omega),$$
and $\tilde v_n$ is an admissible test function from Theorem \ref{brezis browder tpye result}. Therefore:
$$\int_{\Omega}A(x,u_n)\nabla u_n\cdot\nabla\left(u_ne^{\zeta(u_n)}\right)+\frac12\int_{\Omega}\left(D_sA(x,u_n)\nabla u_n\cdot \nabla u_n\right)u_ne^{\zeta(u_n)}=\langle\omega_n,\tilde v_n\rangle.$$
 Since $$\left(A(x,u_n)\nabla u_n\cdot\nabla u_n\zeta'(u_n)+\frac12D_sA(x,u)\nabla u_n\cdot\nabla u_n\right)u_ne^{\zeta(u_n)}\ge0,$$ we can apply Fatou's Lemma and we get that
\begin{align*}
\limsup_{n\to+\infty}\int_{\Omega}e^{\zeta(u_n)}A(x,u_n)\nabla u_n\cdot\nabla u_n\le\int_{\Omega}e^{\zeta(u)}A(x,u)\nabla u\cdot\nabla u.
\end{align*}
\textbf{Step 3}: we have that $u_n\to u$  strongly in $H^1(\Omega)$.
\begin{align*}
\int_{\Omega}e^{\zeta(u_n)}A(x,u_n)\nabla(u_n-u)\cdot\nabla(u_n-u)&=\int_{\Omega}e^{\zeta(u_n)}A(x,u_n)\nabla u_n\cdot\nabla u_n\\
&\quad-2\int_{\Omega}e^{\zeta(u_n)}A(x,u_n)\nabla u\cdot\nabla u_n\\
&\quad+\int_{\Omega}e^{\zeta(u_n)}A(x,u_n)\nabla u\cdot\nabla u.
\end{align*}
On the other hand:
$$\lim_{n\to+\infty}\sum_{i=1}^Ne^{\zeta(u_n)}a_{ij}(x,u_n)\frac{\partial u}{\partial x_i}=\sum_{i=1}^Ne^{\zeta(u)}a_{ij}(x,u)\frac{\partial u}{\partial x_i}$$
strongly in $L^2(\Omega)$ for every $j=1,\dots,N$. Then, from Step 2:
\begin{align*}
\limsup_{n\to+\infty}\int_{\Omega}e^{\zeta(u_n)}A(x,u_n)\nabla(u_n-u)\cdot\nabla(u_n-u)&=\limsup_{n\to+\infty}\int_{\Omega}e^{\zeta(u_n)}A(x,u_n)\nabla u_n\cdot\nabla u_n\\
&-\int_{\Omega}e^{\zeta(u)}A(x,u)\nabla u\cdot\nabla u\le0.
\end{align*}
Since $e^{\zeta(u_n)}\ge1$ and from \eqref{a.2}, we obtain that:
\[\nu\limsup_{n\to+\infty}\int_\Omega|\nabla (u_n-u)|^2\le\limsup_{n\to+\infty}\int_{\Omega}e^{\zeta(u_n)}A(x,u_n)\nabla(u_n-u)\cdot\nabla(u_n-u)\le0.\]
Hence, the strong convergence in $L^2(\Omega)$ implies also that $u_n\to u$ strongly in $H^1(\Omega)$.
\end{proof}
\begin{corollary}\label{CPS condition}
    For every $c\in\R$ and $\lambda\in\R$, the following facts are equivalent:
    \begin{itemize}
        \item[$(a)$] $f_\lambda$ satisfies the $(CPS)_c$-condition,
        \item[$(b)$] every $(CPS)_c$-sequence for $f_\lambda$ is bounded in $H^1(\Omega)$.
    \end{itemize}
\end{corollary}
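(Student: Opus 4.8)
The plan is to prove the two implications separately; $(a)\Rightarrow(b)$ is immediate, while $(b)\Rightarrow(a)$ is where the work lies and it reduces to Theorem \ref{PS condition}.

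For $(a)\Rightarrow(b)$ I would argue by contradiction. Suppose $\{u_n\}$ is a $(CPS)_c$-sequence with $\|u_n\|_{H^1}\to+\infty$ along some subsequence. That subsequence is itself a $(CPS)_c$-sequence, so by $(a)$ it admits a further subsequence converging in $H^1(\Omega)$; a convergent sequence is bounded, contradicting $\|u_n\|_{H^1}\to+\infty$. Hence every $(CPS)_c$-sequence is bounded in $H^1(\Omega)$.

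For $(b)\Rightarrow(a)$, let $\{u_n\}$ be a $(CPS)_c$-sequence. By $(b)$ it is bounded in $H^1(\Omega)$, so after passing to a subsequence $u_n\wto u$ in $H^1(\Omega)$ and, by the compactness of the embeddings $H^1(\Omega)\hookrightarrow L^q(\Omega)$ for $q<2^*$, also $u_n\to u$ strongly in $L^2(\Omega)$ and in $L^{(p-1)(2^*)'}(\Omega)$ — here $(p-1)(2^*)'<2^*$ precisely because $p<2^*$. The step I would carry out next is to show that the zeroth-order remainder converges strongly in $(H^1(\Omega))'$: from \eqref{g.1} with $a\in L^r(\Omega)$, $r\ge(2^*)'$ (so $a\in L^{(2^*)'}(\Omega)$ since $|\Omega|<\infty$), the superposition operator $s\mapsto g(\cdot,s)$ maps $L^{(p-1)(2^*)'}(\Omega)$ continuously into $L^{(2^*)'}(\Omega)$, whence $g(\cdot,u_n)\to g(\cdot,u)$ in $L^{(2^*)'}(\Omega)\hookrightarrow(H^1(\Omega))'$, while $\lambda u_n\to\lambda u$ in $L^2(\Omega)\hookrightarrow(H^1(\Omega))'$ (for $N=2$ one replaces $(2^*)'$ by a suitable exponent $>1$, with only routine changes). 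Then, with $\omega_n$ as in Theorem \ref{PS condition}, Definition \ref{definitio CPS} gives $\omega_n+\lambda u_n-g(\cdot,u_n)\to0$ strongly in $(H^1(\Omega))'$, so $\omega_n$ itself is strongly convergent in $(H^1(\Omega))'$ (in particular $\omega_n\in(H^1(\Omega))'$). Applying Theorem \ref{PS condition} to the bounded sequence $\{u_n\}$ produces a subsequence converging strongly in $H^1(\Omega)$, i.e. $f_\lambda$ satisfies the $(CPS)_c$-condition.

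The main obstacle is the exponent bookkeeping in the third paragraph: verifying $(p-1)(2^*)'<2^*$ and $a\in L^{(2^*)'}(\Omega)$ so that both the compact Sobolev embedding and the continuity of the Nemytskii operator between the correct Lebesgue spaces can be invoked. Once the strong convergence of $\omega_n$ in $(H^1(\Omega))'$ is in hand, everything else is a direct appeal to Theorem \ref{PS condition}, and $(a)\Rightarrow(b)$ is a one-line contradiction argument.
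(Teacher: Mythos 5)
Your proposal is correct and follows essentially the same route as the paper: the whole content is the implication $(b)\Rightarrow(a)$, which both you and the paper reduce to Theorem \ref{PS condition} by extracting a weakly convergent subsequence and showing that $\omega_n$ converges strongly in $(H^1(\Omega))'$. If anything, your exponent bookkeeping for the Nemytskii operator ($(p-1)(2^*)'<2^*$ and $a\in L^{(2^*)'}(\Omega)$) makes the strong dual convergence of $g(\cdot,u_n)-\lambda u_n$ more explicit than the paper does, and you also record the trivial direction $(a)\Rightarrow(b)$ that the paper omits.
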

\begin{proof}
    We prove the difficult part $(b)\implies (a)$. Let $\{u_n\}$ be a bounded $(CPS)_c$-sequence for $f_\lambda$. Then, up to a subsequence, $u_n\wto u$ in $H^1(\Omega)$ and $u_n\to u$ in $L^p(\Omega)$ for every $p\in[1,2^*)$. Hence, 
    \begin{align*}
        &\lambda\int_\Omega u_nv\to\lambda\int_\Omega uv,&\int_\Omega g(x,u_n)v\to\int_\Omega g(x,u)v,
    \end{align*}
    for every $v\in H^1(\Omega)$. Since $\{u_n\}$ is a $(CPS)$-sequence, we have
    \[\int_\Omega A(x,u_n)\nabla u_n\cdot\nabla v+\frac12\int_\Omega\left( D_sA(x,u_n)\nabla u_n\cdot\nabla u_n\right)v=\langle\omega_n,v\rangle,\]
    with $\omega_n=g(x,u_n)-\lambda u_n+\omega_n'$ and $\omega_n'\to0$ in $(H^1(\Omega))'$. Therefore, up to a subsequence, $u_n\to u$ in $H^1(\Omega)$ by Theorem \ref{PS condition}. 
\end{proof}
\begin{proposition}\label{lemma bound}
    Let $c\in\R$ and let $\{u_n\}\subset H^1(\Omega)$ be a $(CPS)_c$-sequence for $f_\lambda$. Then for every $\rho>0, \varepsilon>0$ there exists $K(\rho,\varepsilon)>0$ such that for every $n\in\N$ 
\begin{align*}
\int_{\Omega_1^\rho}A(x,u_{n})\nabla u_n\cdot\nabla u_n\le\varepsilon\int_{\Omega_2^\rho}A(x,u_{n})\nabla u_n\cdot\nabla u_n+K(\rho,\varepsilon),
\end{align*}
\begin{align*}
    &\Omega_1^{\rho}=\{x\in\Omega\ :\ |u_{n}|\le \rho\},&\Omega_2^{\rho}=\{x\in\Omega\ :\ |u_{n}|>\rho\}.
\end{align*} 
\end{proposition}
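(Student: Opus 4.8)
The plan is to test the defining relation of the Concrete Palais--Smale sequence (Definition \ref{definitio CPS}) against a bounded, exponentially weighted, \emph{compactly supported} function of $u_n$. Since $\Omega_1^\rho\subseteq\Omega_1^R$ and $\Omega_2^R\subseteq\Omega_2^\rho$ when $\rho\le R$, and since for each fixed $n$ one trivially has $\int_{\Omega_1^\rho}A(x,u_n)\nabla u_n\cdot\nabla u_n\le NC\,\|u_n\|_{H^1}^2<\infty$, it suffices to prove the bound for $\rho\ge R$ and for all large $n$, enlarging $K$ at the end to absorb finitely many indices. Fix $M>0$ with $\tfrac12|D_sA(x,s)\xi\cdot\xi|\le M\,A(x,s)\xi\cdot\xi$ (possible by \eqref{a.1}--\eqref{a.2}, as in \eqref{cond 3.1}), set $C_\rho:=e^{2M\rho}-1$, and let $\varphi$ be the odd, bounded, Lipschitz function $\varphi(s)=\mathrm{sign}(s)\,(e^{2M\min\{|s|,\rho\}}-1)$. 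For a small $\beta=\beta(\rho,\varepsilon)\in(0,1)$ and $S:=C_\rho/\beta$ (both fixed below) take $v_n:=\varphi(u_n)-\beta\,T_S(u_n)$, with $T_S$ the truncation at $\pm S$; then $v_n\in H^1(\Omega)\cap L^\infty(\Omega)$ is admissible by Remark \ref{density remark}, $v_n\equiv0$ on $\{|u_n|>S\}$ because $C_\rho-\beta S=0$, and $\nabla v_n=(\varphi'(u_n)-\beta T_S'(u_n))\nabla u_n$ is supported in $\{|u_n|<S\}$; in particular $|v_n|\le 2C_\rho$ and $|u_nv_n|\le 2C_\rho^2/\beta$ pointwise.

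Writing $\omega_n=g(x,u_n)-\lambda u_n+\omega_n'$, where $\omega_n=-\mathrm{div}(A(x,u_n)\nabla u_n)+\tfrac12 D_sA(x,u_n)\nabla u_n\cdot\nabla u_n$ and $\|\omega_n'\|_{(H^1(\Omega))'}\to0$, testing against $v_n$ gives
\[
\int_\Omega(\varphi'(u_n)-\beta T_S'(u_n))A(x,u_n)\nabla u_n\cdot\nabla u_n+\tfrac12\int_\Omega(\varphi(u_n)-\beta T_S(u_n))D_sA(x,u_n)\nabla u_n\cdot\nabla u_n=\int_\Omega g(x,u_n)v_n-\lambda\int_\Omega u_nv_n+\langle\omega_n',v_n\rangle.
\]
The crux is the lower bound for the left-hand side. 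On $\Omega_1^\rho$ one has $\varphi'(u_n)=2Me^{2M|u_n|}$, $T_S'(u_n)=1$, and $\tfrac12|(\varphi(u_n)-\beta u_n)D_sA(x,u_n)\nabla u_n\cdot\nabla u_n|\le M(e^{2M|u_n|}-1+\beta\rho)A(x,u_n)\nabla u_n\cdot\nabla u_n$; the $e^{2M|u_n|}$ contributions partially cancel, leaving an integrand $\ge(2M-\beta(1+M\rho))A(x,u_n)\nabla u_n\cdot\nabla u_n\ge M\,A(x,u_n)\nabla u_n\cdot\nabla u_n$ as soon as $\beta\le M/(1+M\rho)$ --- the exponential weight is precisely what keeps $\varphi'$ dominant over $M|\varphi|$. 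On $\Omega_2^\rho$ (where $|u_n|>\rho\ge R$) we have $\varphi'(u_n)=0$, the integrand vanishes on $\{|u_n|\ge S\}$ by the choice of $S$, and on $\{\rho<|u_n|<S\}$ the $D_sA$-contribution equals $\tfrac{C_\rho}{2}\mathrm{sign}(u_n)D_sA(x,u_n)\nabla u_n\cdot\nabla u_n-\tfrac{\beta}{2}u_nD_sA(x,u_n)\nabla u_n\cdot\nabla u_n$, whose first term is $\ge0$ by \eqref{a.3} and whose second term lies in $[-\tfrac{\beta\gamma}{2}A(x,u_n)\nabla u_n\cdot\nabla u_n,\,0]$ by \eqref{a.3} and \eqref{a.4}; hence the integrand on $\Omega_2^\rho$ is $\ge-\beta(1+\tfrac{\gamma}{2})A(x,u_n)\nabla u_n\cdot\nabla u_n$. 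Altogether the left-hand side is $\ge M\!\int_{\Omega_1^\rho}A(x,u_n)\nabla u_n\cdot\nabla u_n-\beta(1+\tfrac{\gamma}{2})\!\int_{\Omega_2^\rho}A(x,u_n)\nabla u_n\cdot\nabla u_n$.

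For the right-hand side, $v_n$ and $u_nv_n$ are pointwise bounded by constants depending only on $\rho,\varepsilon$, so \eqref{g.1} gives $\big|\int_\Omega g(x,u_n)v_n\big|+|\lambda|\int_\Omega|u_nv_n|\le K(\rho,\varepsilon)$ uniformly in $n$ --- this is exactly the step that would break down without the compact support of $v_n$, since $\{u_n\}$ carries no a priori $H^1$ bound. Moreover $\|v_n\|_{H^1}^2\le 4C_\rho^2|\Omega|+\tfrac{4M^2e^{4M\rho}}{\nu}\int_{\Omega_1^\rho}A(x,u_n)\nabla u_n\cdot\nabla u_n+\tfrac{\beta^2}{\nu}\int_{\Omega_2^\rho}A(x,u_n)\nabla u_n\cdot\nabla u_n$ because $\nabla v_n$ lives on $\{|u_n|<S\}$; estimating $|\langle\omega_n',v_n\rangle|\le\|\omega_n'\|_{(H^1(\Omega))'}\|v_n\|_{H^1}$ and applying Young's inequality, for $n$ large the $\Omega_1^\rho$-term is absorbed into the left-hand side and the $\Omega_2^\rho$-term acquires a coefficient of order $\beta$. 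Combining the two sides yields $\tfrac{3M}{4}\int_{\Omega_1^\rho}A(x,u_n)\nabla u_n\cdot\nabla u_n\le c\,\beta\int_{\Omega_2^\rho}A(x,u_n)\nabla u_n\cdot\nabla u_n+K(\rho,\varepsilon)$ for a structural constant $c$, and it remains to fix $\beta:=\min\{M/(1+M\rho),\,3M\varepsilon/(4c)\}$ and enlarge $K$. The main obstacle is the simultaneous control, on the unbounded set $\Omega_2^\rho$, of the quasilinear term $D_sA(x,u_n)\nabla u_n\cdot\nabla u_n$ --- which must be neutralized using \emph{both} signs furnished by \eqref{a.3} and \eqref{a.4} --- and of the zeroth-order terms $g(x,u_n)$ and $\lambda u_n$, uniformly in $n$ and without any bound on $\|u_n\|_{H^1}$; this is what forces the exponentially weighted, compactly supported shape of the test function.
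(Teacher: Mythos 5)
Your proof is correct, but it takes a genuinely different route from the paper's. The paper proves the estimate by an iteration over layers of width $\sigma=\frac{1}{2M}$: it tests the equation with a sequence of compactly supported, piecewise-linear ``tent'' functions $\theta_1,\theta_2,\dots$ of $u_n$, each step bounding $\int_{\{|u_n|\le k\sigma\}}A(x,u_n)\nabla u_n\cdot\nabla u_n$ by the integral over the next annulus $\{k\sigma<|u_n|\le(k+1)\sigma\}$, and a final tent with a shallow descending slope $\delta$ produces the small factor $\varepsilon$ in front of the outer integral; the layer width must be small because a linear test function can absorb the term $\frac12\theta(u_n)D_sA(x,u_n)\nabla u_n\cdot\nabla u_n$ into the principal part only when $\sigma M<1$. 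You replace the whole iteration by a single test function $\varphi(u_n)-\beta T_S(u_n)$ whose exponential weight $e^{2M|s|}$ makes $\varphi'$ dominate $M|\varphi|$ uniformly on all of $\{|u_n|\le\rho\}$ at once --- the same device the paper itself deploys in the proof of Theorem \ref{PS condition} --- while the truncated linear correction $-\beta T_S(u_n)$ supplies both the compact support (which is what lets you control $g(x,u_n)v_n$ and $\lambda u_nv_n$ uniformly without any a priori $H^1$ bound) and the small coefficient $\beta\sim\varepsilon$ on $\Omega_2^\rho$. This is shorter and avoids the bookkeeping of the iteration. Two minor remarks: your appeal to \eqref{a.4} on $\{\rho<|u_n|<S\}$ is unnecessary, since there $v_n=\mathrm{sign}(u_n)\,(C_\rho-\beta|u_n|)$ has the same sign as $u_n$, so $v_n D_sA(x,u_n)\nabla u_n\cdot\nabla u_n\ge0$ already follows from \eqref{a.3} alone (the paper's proof indeed uses only \eqref{a.3}, so your statement would otherwise be slightly less general than theirs); and the restriction to large $n$ is also not needed, because $\|\omega_n'\|_{(H^1(\Omega))'}$ is bounded along the whole sequence, so the Young-inequality absorption works for every $n$.
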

\begin{proof}
The proof is similar to that of \cite[Theorem 4.4]{caninoserdica}, but we include it here for the sake of completeness.

Let 
\[
w_n:=-div(A(x,u_n)\nabla u_n)+\frac12D_sA(x,u_n)\nabla u_n\cdot\nabla u_n-(g(x,u_n)-\lambda u_n),
\]
 $\sigma>0$ and  
\[
\theta_1(s)=
\begin{cases}
s&|s|<\sigma\\
-s+2\sigma&\sigma\le s<2\sigma\\
-s-2\sigma&-2\sigma<s\le-\sigma\\
0&|s|\ge2\sigma.
\end{cases}
\]
Since $\langle w_n,\theta_1(u_n)\rangle\le\|w_n\|_{(H^1)'}\cdot\|\theta_1(u_n)\|_{H^1}$, we obtain
\begin{align*}
\int_{\Omega}A(x,u_n)\nabla u_n\cdot\nabla(\theta_1(u_n))&+\frac12\int_{\Omega}\left(D_sA(x,u_n)\nabla u_n\cdot\nabla u_n\right)\theta_1(u_n)\le\int_{\Omega}(g(x,u_n)-\lambda u_n)\theta_1(u_n)\\
&+\|w_n\|_{(H^1)'}\cdot\|\theta_1(u_n)\|_{H^1}.
\end{align*}
From \eqref{g.1} and weighted Young's inequality, we can write:
\begin{align*}
\int_{\{|u_n|\le\sigma\}}A(x,u_n)\nabla u_n\cdot\nabla u_n&-\int_{\{\sigma<|u_n|<2\sigma\}}A(x,u_n)\nabla u_n\cdot\nabla u_n\\
&+\frac12\int_{\{|u_n|\le\sigma\}}\left(D_sA(x,u_n)\nabla u_n\cdot\nabla u_n\right)\theta_1(u_n)\\
&+\frac12\int_{\{\sigma<|u_n|\le2\sigma\}}\left(D_sA(x,u_n)\nabla u_n\cdot\nabla u_n\right)\theta_1(u_n)\le\int_{\Omega}\left[a(x)+b|2\sigma|^{p-1}\right]\sigma\\
&+\kappa_1(|\lambda|)\sigma^2+\frac{1}{\nu}\|w_n\|_{(H^1)'}^2+\frac{\nu}{4}\|\theta_1(u_n)\|_{H^1}^2.
\end{align*}
Let $M>0$ be such that \eqref{cond 3.1} holds
and let $K_0>0$ be such that $\|w_n\|_{(H^1)'}\le K_0$. According to  \eqref{a.2}, we have:
\begin{align*}
\|\theta_1(u_n)\|_{H^1}^2&=\int_\Omega|\nabla\theta_1(u_n)|^2+\int_\Omega|\theta_1(u_n)|^2\\
&\le\frac{1}{\nu}\left[\int_{\{|u_n|\le\sigma\}}A(x,u_n)\nabla u_n\cdot\nabla u_n+\int_{\{\sigma<|u_n|\le2\sigma\}}A(x,u_n)\nabla u_n\cdot\nabla u_n\right]+\kappa_2\sigma^2.
\end{align*}
Then from \eqref{cond 3.1}:
\begin{align*}
\left(1-\sigma M-\frac14\right)\int_{\{|u_n|\le\sigma\}}A(x,u_n)\nabla u_n\cdot\nabla u_n&\le\left(1+\sigma M+\frac14\right)\int_{\{\sigma<|u_n|\le2\sigma\}}A(x,u_n)\nabla u_n\cdot\nabla u_n\\
&\quad+\int_{\Omega}\left[a(x)+b|2\sigma|^{p-1}\right]\sigma+\frac{K_0^2}{\nu}+\frac
{\nu}{4}\kappa_2\sigma^2.
\end{align*}
Choosing $\sigma=\frac{1}{2M}$:
\begin{align}\label{stima integrali eq1 cap3}
\int_{\{|u_n|\le\sigma\}}A(x,u_n)\nabla u_n\cdot\nabla u_n\le K_1\int_{\{\sigma<|u_n|\le2\sigma\}}A(x,u_n)\nabla u_n+K_2.
\end{align}
Similarly, if we consider
\[
\theta_2(s)=
\begin{cases}
0&|s|\le\sigma\\
s-\sigma&\sigma<s<2\sigma\\
s+\sigma&-2\sigma<s<-\sigma\\
-s+3\sigma&2\sigma\le s<3\sigma\\
-s-3\sigma&-3\sigma<s\le-2\sigma\\
0&|s|\ge3\sigma,
\end{cases}
\]
we get
\begin{align}\label{stima integrali eq2 cap3}
\int_{\{\sigma<|u_n|\le2\sigma\}}A(x,u_n)\nabla u_n\cdot\nabla u_n\le K_1'\int_{\{2\sigma<|u_n|\le3\sigma\}}A(x,u_n)\nabla u_n\cdot\nabla u_n+K_2',
\end{align}
which can be written as follows:
\begin{align*}
\int_{\{|u_n|\le2\sigma\}}A(x,u_n)\nabla u_n\cdot\nabla u_n&-\int_{\{|u_n|\le\sigma\}}A(x,u_n)\nabla u_n\cdot\nabla u_n\\
&\quad\le K_1'\int_{\{2\sigma<|u_n|\le3\sigma\}}A(x,u_n)\nabla u_n\cdot\nabla u_n.
\end{align*}
On the other hand, by \eqref{stima integrali eq1 cap3}-\eqref{stima integrali eq2 cap3}:
\begin{align*}
-\int_{\{|u_n|\le\sigma\}}A(x,u_n)\nabla u_n\cdot\nabla u_n&\ge-K_1\int_{\{\sigma<|u_n|\le2\sigma\}}A(x,u_n)\nabla u_n\cdot\nabla u_n-K_2\\
&\ge-K_1K_1'\int_{\{2\sigma<|u_n|\le3\sigma\}}A(x,u_n)\nabla u_n\cdot\nabla u_n-K_1K_2'-K_2.
\end{align*}
Therefore
\begin{align*}
\int_{\{|u_n|\le2\sigma\}}A(x,u_n)\nabla u_n\cdot\nabla u_n\le K_1''\int_{\{2\sigma<|u_n|\le3\sigma\}}A(x,u_n)\nabla u_n\cdot\nabla u_n+K_2''.
\end{align*}
Iterating the procedure, for each $k\ge1$ we get
\begin{align}
\label{iterative 4.3}
\int_{\{|u_n|\le k\sigma\}}A(x,u_n)\nabla u_n\cdot\nabla u_n\le K_1^{(k)}\int_{\{k\sigma<|u_n|\le(k+1)\sigma\}}A(x,u_n)\nabla u_n\cdot\nabla u_n+K_2^{(k)}.
\end{align}
Let, now, $k\ge1$ be such that $k\sigma\ge R$ and $\delta\in(0,1)$. We consider
\[
\theta_{\delta}(s)=
\begin{cases}
0&|s|\le k\sigma\\
s-k\sigma&k\sigma<s<(k+1)\sigma\\
s+k\sigma&-(k+1)\sigma<s<-k\sigma\\
-\delta s+\sigma+\delta(k+1)\sigma&(k+1)\sigma\le s<(k+1)\sigma+\frac{\sigma}{\delta}\\
-\delta s-\sigma-\delta(k+1)\sigma&-(k+1)\sigma-\frac{\sigma}{\delta}<s\le-(k+1)\sigma\\
0&|s|\ge(k+1)\sigma+\frac{\sigma}{\delta}.
\end{cases}
\]
As in the first part of the proof:
\begin{align*}
\int_{\Omega}A(x,u_n)\nabla u_n\cdot\nabla(\theta_{\delta}(u_n))&+\frac12\int_{\Omega}\left(D_sA(x,u_n)\nabla u_n\cdot\nabla u_n\right)\theta_{\delta}(u_n)\\
&\le\int_{\Omega}(g(x,u_n)-\lambda u_n)\theta_{\delta}(u_n)+\|w_n\|_{(H^1)'}\cdot\|\theta_{\delta}(u_n)\|_{H^1}\\
&\le\int_{\Omega}(g(x,u_n)-\lambda u_n)\theta_{\delta}(u_n)+\frac{1}{4\delta}\|w_n\|_{(H^1)'}^2+\delta\|\theta_{\delta}(u_n)\|_{H^1}^2.
\end{align*}
We note that
\begin{align*}
\int_{\Omega}\left(D_sA(x,u_n)\nabla u_n\cdot\nabla u_n\right)\theta_{\delta}(u_n)\ge0.
\end{align*}
According to \eqref{a.2}:
\begin{align*}
\|\theta_{\delta}(u_n)\|_{H^1}^2&=\int_\Omega|\nabla\theta_{\delta}(u_n)|^2+\int_\Omega|\theta_{\delta}(u_n)|^2\\
&\le\frac{1}{\nu}\int_{\{k\sigma<|u_n|\le(k+1)\sigma\}}A(x,u_n)\nabla u_n\cdot\nabla u_n+\frac{1}{\nu}\int_{\{|u_n|>(k+1)\sigma\}}A(x,u_n)\nabla u_n\cdot\nabla u_n\\
&\quad+\kappa_4(\delta,\sigma,k).
\end{align*}
Thus
\begin{align*}
\left(1-\frac{\delta}{\nu}\right)\int_{\{k\sigma<|u_n|\le(k+1)\sigma\}}A(x, u_n)\nabla u_n\cdot\nabla u_n&\le\left(\delta+\frac{\delta}{\nu}\right)\int_{\{|u_n|>(k+1)\sigma\}}A(x,u_n)\nabla u_n\cdot\nabla u_n\\
&+\int_{\Omega}\left[a(x)+b\left((k+1)\sigma+\frac{\sigma}{\delta}\right)^{p-1}\right]\sigma\\
&+\frac{K_0^2}{4\delta}+\kappa_4,
\end{align*}
and
\begin{align*}
\int_{\{k\sigma<|u_n|\le(k+1)\sigma\}}A(x,u_n)\nabla u_n\cdot\nabla u_n\le\frac{\nu\delta+\delta}{\nu-\delta}\int_{\{|u_n|>(k+1)\sigma\}}A(x,u_n)\nabla u_n\cdot\nabla u_n+K_3(k,\delta).
\end{align*}
Finally, combining with the \eqref{iterative 4.3}:
\begin{align*}
\int_{\Omega_1^{\rho}}A(x,u_n)\nabla u_n\cdot\nabla u_n&\le\int_{\{|u_n|\le k\sigma\}}A(x,u_n)\nabla u_n\cdot\nabla u_n\\
&\le K_1^{(k)}\int_{\{k\sigma<|u_n|\le(k+1)\sigma\}}A(x,u_n)\nabla u_n\cdot\nabla u_n+K_2^{(k)}\\
&\le K_1^{(k)}\frac{\nu\delta+\delta}{\nu-\delta}\int_{\{|u_n|>(k+1)\sigma\}}A(x,u_n)\nabla u_n\cdot\nabla u_n+K_1^{(k)}K_3(k,\delta)+K_2^{(k)}\\
&\le K_1^{(k)}\frac{\nu\delta+\delta}{\nu-\delta}\int_{\Omega_2^{\rho}}A(x,u_n)\nabla u_n\cdot\nabla u_n+K_1^{(k)}K_3(k,\delta)+K_2^{(k)}.
\end{align*}
If we take $\delta$ such that
$$K_1^{(k)}\frac{\nu\delta+\delta}{\nu-\delta}\le\varepsilon,$$
we obtain the thesis with $K(\rho,\varepsilon):=K_1^{(k)}K_3(k,\delta_\varepsilon)+K_2^{(k)}$.
  
\end{proof}
\begin{theorem}\label{CPS bounded}
    Let $c\in\R$ and $\lambda\in\R$. Every $(CPS)_c$-sequence for $f_\lambda$ is bounded in $H^1(\Omega)$.
\end{theorem}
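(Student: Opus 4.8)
The strategy is the Ambrosetti--Rabinowitz device, supplemented by Proposition~\ref{lemma bound} to deal with the low--energy region $\{|u_n|\le R\}$ and by a Poincar\'e--Wirtinger argument to recover the $L^2$--part, which is the feature proper to the Neumann problem. Fix a $(CPS)_c$--sequence $\{u_n\}\subset H^1(\Omega)$ and let $\omega_n\in (H^1(\Omega))'$ be as in Definition~\ref{definitio CPS}, so that $f_\lambda(u_n)\to c$ and $\|\omega_n\|_{(H^1(\Omega))'}\to 0$. By Remark~\ref{density remark} (which uses \eqref{a.3}) the choice $v=u_n$ is an admissible test function, and expanding $qf_\lambda(u_n)-\langle\omega_n,u_n\rangle$ gives
\begin{align*}
qf_\lambda(u_n)-\langle\omega_n,u_n\rangle&=\Big(\tfrac q2-1\Big)\int_\Omega A(x,u_n)\nabla u_n\cdot\nabla u_n+\Big(\tfrac q2-1\Big)\lambda\int_\Omega u_n^2\\
&\quad-\tfrac12\int_\Omega\big(D_sA(x,u_n)\nabla u_n\cdot\nabla u_n\big)u_n+\int_\Omega\big(u_ng(x,u_n)-qG(x,u_n)\big).
\end{align*}
On $\{|u_n|\ge R\}$ the last integrand is nonnegative by \eqref{g.2}, and $0\le u_n\,D_sA(x,u_n)\nabla u_n\cdot\nabla u_n\le\gamma\,A(x,u_n)\nabla u_n\cdot\nabla u_n$ by \eqref{a.3}--\eqref{a.4}; on $\{|u_n|<R\}$ the same two quantities are controlled, in $L^1(\Omega)$ by \eqref{g.1}, and by $\tfrac{NCR}{\nu}A(x,u_n)\nabla u_n\cdot\nabla u_n$ thanks to \eqref{a.1}--\eqref{a.2}. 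Writing $P_1=\int_{\{|u_n|\le R\}}A(x,u_n)\nabla u_n\cdot\nabla u_n$ and $P_2=\int_{\{|u_n|>R\}}A(x,u_n)\nabla u_n\cdot\nabla u_n$, one obtains, for suitable $c_0,C_1>0$ independent of $n$,
\[
qf_\lambda(u_n)-\langle\omega_n,u_n\rangle\ \ge\ \frac{q-2-\gamma}{2}\,P_2-c_0\,P_1+\Big(\tfrac q2-1\Big)\lambda\int_\Omega u_n^2-C_1,
\]
where the point is that $\gamma<q-2$, so the coefficient of $P_2$ is strictly positive.

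The next step is to absorb $c_0P_1$: Proposition~\ref{lemma bound} with $\rho=R$ and $\varepsilon>0$ gives $P_1\le\varepsilon P_2+K(\varepsilon)$, so for $\varepsilon$ small the coefficient of $P_2$ stays equal to some $\beta>0$. Since the left--hand side is at most $q|f_\lambda(u_n)|+\|\omega_n\|_{(H^1(\Omega))'}\|u_n\|_{H^1}\le C_2+o(1)\|u_n\|_{H^1}$, we arrive at
\[
\beta\,P_2+\Big(\tfrac q2-1\Big)\lambda\int_\Omega u_n^2\ \le\ C_3+o(1)\|u_n\|_{H^1}.
\]
Using Proposition~\ref{lemma bound} once more to pass from $P_2$ to $\int_\Omega A(x,u_n)\nabla u_n\cdot\nabla u_n$ and then \eqref{a.2}, this yields $\nu\|\nabla u_n\|_{L^2}^2\le C_4+o(1)\|u_n\|_{H^1}$; when $\lambda>0$ it also gives $\|u_n\|_{L^2}^2\le C_5+o(1)\|u_n\|_{H^1}$, hence $\|u_n\|_{H^1}^2\le C_6+o(1)\|u_n\|_{H^1}$, and the boundedness follows. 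The case $\lambda\le 0$ is obtained once the $L^2$--part is controlled as below.

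The real obstacle, absent in the Dirichlet setting, is that $\|\cdot\|_{H^1(\Omega)}$ also sees the $L^2$--norm while no Poincar\'e inequality holds, so the argument must be completed by a bound on the mean values $\overline{u_n}:=\tfrac1{|\Omega|}\int_\Omega u_n$; indeed, by Poincar\'e--Wirtinger $\|u_n-\overline{u_n}\|_{L^2}\le C_P\|\nabla u_n\|_{L^2}$, so once $\|\nabla u_n\|_{L^2}$ is under control it is precisely $\overline{u_n}$ that remains. For this I would test the equation with the constant function $v\equiv 1$, admissible because $D_sA(x,u_n)\nabla u_n\cdot\nabla u_n\in L^1(\Omega)$ by \eqref{a.1}, obtaining
\[
\lambda\int_\Omega u_n=\int_\Omega g(x,u_n)-\tfrac12\int_\Omega D_sA(x,u_n)\nabla u_n\cdot\nabla u_n+o(1),
\]
then estimate $\int_\Omega g(x,u_n)$ by \eqref{g.1} and an interpolation/Sobolev inequality and feed back the gradient bound of the previous step. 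Making this interplay quantitative --- so that $\|u_n\|_{L^2}$ is genuinely absorbed into $C+o(1)\|u_n\|_{H^1}$ rather than reappearing on the right --- is the main difficulty; it is immediate from the Ambrosetti--Rabinowitz inequality when $\lambda>0$, and for $\lambda\le 0$ it requires combining the estimates of the first two steps with the identity just displayed.
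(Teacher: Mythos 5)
Your argument for $\lambda>0$ is essentially the paper's: the Ambrosetti--Rabinowitz combination $qf_\lambda(u_n)-\langle\omega_n,u_n\rangle$ (the paper uses $f_\lambda(u_n)-\frac1q\langle\omega_n,u_n\rangle$, which is the same up to a factor), the splitting of $\int_\Omega u_n\,D_sA(x,u_n)\nabla u_n\cdot\nabla u_n$ over $\{|u_n|\le R\}$ and $\{|u_n|>R\}$ via \eqref{a.1}--\eqref{a.4}, and the absorption of the low-set gradient integral through Proposition~\ref{lemma bound}. For $\lambda>0$ the term $(\frac q2-1)\lambda\int_\Omega u_n^2$ has the right sign and the $L^2$-part comes for free; that half is correct.

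The case $\lambda\le0$ is where your proof has a genuine gap, and you say so yourself. Two concrete problems. First, when $\lambda<0$ the term $(\frac q2-1)\lambda\int_\Omega u_n^2$ is negative, so your inequality $\beta P_2\le C_3+o(1)\|u_n\|_{H^1}$ does not follow until the $L^2$-norm has been controlled; but your device for controlling it (Poincar\'e--Wirtinger plus a bound on the mean $\overline{u_n}$) presupposes the gradient bound, so the two steps are circular as written. Second, the identity obtained by testing with $v\equiv1$ cannot do the job: at $\lambda=0$ its left-hand side vanishes identically and it carries no information on $\overline{u_n}$; for $\lambda<0$ it only gives $|\lambda|\,|\Omega|\,|\overline{u_n}|\le\int_\Omega|g(x,u_n)|+\frac12\int_\Omega|D_sA(x,u_n)\nabla u_n\cdot\nabla u_n|+o(1)$, whose right-hand side grows like $\|u_n\|_{H^1}^{p-1}$ (from \eqref{g.1}) and like $\|\nabla u_n\|_{L^2}^2$, neither of which is of the form $C+o(1)\|u_n\|_{H^1}$. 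The paper avoids the mean entirely: in its second case it adds and subtracts $\int_\Omega u_n^2$ so as to produce the full $H^1$-norm, and controls the leftover $(1-\lambda)\int_\Omega u_n^2$ by the weighted Young inequality $\int_\Omega u_n^2\le C(\delta)+\delta\int_\Omega|u_n|^p$, the $|u_n|^p$-term being absorbed by the superquadratic term coming from \eqref{g.2}. You need either that absorption argument or a genuinely different way to bound $\overline{u_n}$; the constant test function does not provide one.
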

   \begin{proof}
       We define 
       \[\omega_n=-div(A(x,u_n)\nabla u_n)+\frac12 D_sA(x,u_n)\nabla u_n\cdot\nabla u_n-(g(x,u_n)-\lambda u_n).\]
       Since $\{u_n\}$ is $(CPS)_c$-sequence, we have that $\omega_n\to 0$ in $\left(H^1(\Omega)\right)'$. Then 
       \begin{align*}
           -\|\omega_n\|\cdot\|u_n\|\le\langle\omega_n,u_n\rangle&=\int_\Omega A(x,u_n)\nabla u_n\cdot\nabla u_n+\frac12\int_\Omega \left(D_sA(x,u_n)\nabla u_n\cdot\nabla u_n\right)u_n\\
           &\quad-\int_\Omega (g(x,u_n)-\lambda u_n)u_n.
       \end{align*}
       Let $\Omega_1^R,\Omega_2^R$ be open sets as in Proposition \ref{lemma bound}. According to hypothesis \eqref{a.4}, we have
       \begin{align*}
           \int_\Omega \left(D_sA(x,u_n)\nabla u_n\cdot\nabla u_n\right)u_n&=\int_{\Omega_1^R}\left(D_sA(x,u_n)\nabla u_n\cdot\nabla u_n\right)u_n+\int_{\Omega_2^R}\left(D_sA(x,u_n)\nabla u_n\cdot\nabla u_n\right)u_n\\
           &\le NCR\int_\Omega |\nabla u_n|^2+\gamma\int_{\Omega_2^R}A(x,u_n)\nabla u_n\cdot\nabla u_n.
       \end{align*}
       Let $\gamma'\in(\gamma,q-2)$ and $\varepsilon>0$ such that $\frac{NCR\varepsilon}{\nu}\le\gamma'-\gamma$. From Proposition \ref{lemma bound}, we obtain:
       \begin{align*}
           \int_\Omega \left(D_sA(x,u_n)\nabla u_n\cdot\nabla u_n\right)u_n&\le\frac{NCR}{\nu}\int_{\Omega_1^R}|\nabla u_n|^2+\gamma\int_{\Omega_2^R}A(x,u_n)\nabla u_n\cdot\nabla u_n\\
           &\le\left(\frac{NCR\varepsilon}{\nu}+\gamma\right)\int_{\Omega_2^R}A(x,u_n)\nabla u_n\cdot\nabla u_n+K(R,\varepsilon)\\
           &\le\gamma'\int_\Omega A(x,u_n)\nabla u_n\cdot\nabla u_n+K(R,\varepsilon).
       \end{align*}
       Now, we split the proof in two cases:
       
       \textbf{Case 1}: $\lambda>0$. Exploiting \eqref{g.2}, we have:
\begin{align*}
    c(1+\|u_n\|)&\ge f_\lambda(u_n)-\frac{1}{q}\langle\omega_n,u_n\rangle=\frac{q-2}{2q}\int_\Omega\left[A(x,u_n)\nabla u_n\cdot \nabla u_n+\lambda u_n^2\right]\\
    &\quad-\frac{1}{2q}\int_\Omega \left(D_sA(x,u_n)\nabla u_n\cdot\nabla u_n\right) u_n+\int_\Omega\left[\frac{1}{q}g(x,u_n)u_n-G(x,u_n)\right]\\
    &\ge\frac{q-2-\gamma'}{2q}\int_\Omega\left[ A(x,u_n)\nabla u_n\cdot\nabla u_n+\lambda u_n^2\right]-K'(R,\varepsilon)\\
    &\ge\beta\|u_n\|^2-K'(R,\varepsilon),
\end{align*}
for some $\beta>0$.
Therefore, $\{u_n\}$ is bounded in $H^1(\Omega)$.

\textbf{Case 2}: $\lambda\le0$.
We add and subtract $u_n^2$, obtaining:
\[\int_\Omega\left[A(x,u_n)\nabla u_n\cdot \nabla u_n+ u_n^2\right]-(1-\lambda)\int_\Omega u_n^2.\]
We apply the weighted Young's inequality:
\[\int_\Omega u_n^2\le \frac{p-2}{p}\delta^{-\frac{p}{p-2}}+\frac{2}{p}\delta^{\frac p2}\int_\Omega |u_n|^p.\]
 Thus, for small $\delta > 0$, we get:
\begin{align*}
    c(1+\|u_n\|)&\ge\frac{q-2-\gamma'}{2q}\int_\Omega \left[A(x,u_n)\nabla u_n\cdot\nabla u_n+u_n^2\right]-K''(R,\varepsilon,\delta)\\
    &\ge\frac{q-2-\gamma'}{2q}\min\{\nu,1\}\|u_n\|^2-K''(R,\varepsilon,\delta).
\end{align*}
Hence, $\{u_n\}$ is bounded in $H^1(\Omega).$
   \end{proof} 
\section{Main Theorem}
\begin{proof}[Proof of Theorem \ref{main result1}]
We proceed to verify the assumptions of Theorem \ref{MPequi}, following \cite[Theorem 6.6]{struwe2008variational}. \\
    \textbf{Step 1}: There exist a subspace $W\subset H^1(\Omega)$ of finite codimension and  $\rho,\alpha>0$ such that $f_\lambda(u)\ge\alpha$ for every $u\in \partial B_\rho\cap W$. 
    
     Indeed, let $\varphi_j$ be the eigenfunctions associated to the following elliptic problem
     \[
    \begin{cases}
    -\Delta \varphi_j=\lambda_j\varphi_j&\text{in $\Omega$},\\
    \frac{\partial \varphi_j}{\partial\eta}=0&\text{on $\partial\Omega$},
    \end{cases}
    \]
   with eigenvalue $\lambda_j$ (recall that $\lambda_0=0$ is the first eigenvalue and $\varphi_0=|\Omega|^{-1}$ is the first eigenfunction). 
   
  If $\lambda>0$, we have
    \begin{align*}
        f_\lambda(u)&\ge \frac{\nu}{2}\int_\Omega |\nabla u|^2+\frac{\lambda}{2}\int_\Omega u^2-\int_\Omega |a(x)u|-\frac{|b|}{p}\int_\Omega |u|^p\\
        &\ge\frac{\nu}{2}\int_\Omega |\nabla u|^2+\frac{\lambda}{2}\int_\Omega u^2-c_2\|u\|-\frac{|b|}{p}\int_\Omega|u|^p\\
        &\ge c_1\|u\|_{\lambda}^2-c_3\left(\int_\Omega|{u}|^2\right)^{\frac{t}{2}}\cdot\left(\int_\Omega|{u}|^{2^*}\right)^{\frac{p-t}{2^*}}-c_2\|u\|\\
        &\ge c_1\|u\|_{\lambda}^2-c_3\lambda_{h_0}^{-\frac{t}{2}}\|{u}\|^{p}-c_2\|u\|\\
        &=\|u\|\left[\left(c_1-c_3\lambda_{h_0}^{-\frac{t}{2}}\|{u}\|^{p-2}\right)\|u\|-c_2\right]
    \end{align*}
    with $\rho:=1+c_2$, $\frac{t}{2}+\frac{p-t}{2^*}=1$,  and $h_0$ large enough such that
    \[c_1-c_3\lambda_{h_0}^{-\frac{t}{2}}\rho^{p-2}\ge1.\]
    Thus,    $f_\lambda(u)\ge\alpha$ in $\partial B_\rho\cap W$, where  $W=\overline{\text{span}\{{\varphi}_j,\ j\ge h_0\}}$ and $\alpha>0$ is a positive number.
    
     If $\lambda\le0$ we consider $W=\overline{\text{span}\{\varphi_j,\ j\ge h_0\}}$  and $h_0\in\N$ is such that $\lambda\ge-\nu\lambda_{h_0}$. Let $u\in\partial B_\rho\cap W$.
   \begin{align*}
       f_\lambda(u)&\ge \frac{\nu}{2}\int_\Omega |\nabla u|^2+\frac{\lambda}{2}\int_\Omega u^2-\int_\Omega |a(x)u|-|b|\int_\Omega |u|^p\\
       &\ge\frac{\lambda_{h_0}\nu+\lambda}{2}\|u\|^2-c_2\|u\|-\frac{|b|}{p}\int_\Omega|u|^p\\
       &\ge\frac{\lambda_{h_0}\nu+\lambda}{2}\|u\|^2-c_3\left(\int_\Omega|{u}|^2\right)^{\frac{t}{2}}\cdot\left(\int_\Omega|{u}|^{2^*}\right)^{\frac{p-t}{2^*}}-c_2\|u\|\\
       &\ge\frac{\lambda_{h_0}\nu+\lambda}{2}\|u\|^2-c_3\lambda_{h_0}^{-\frac{t}{2}}\|{u}\|^{p}-c_2\|u\|\\
       &=\|u\|\left[\left(\tilde c_1-c_3\lambda_{h_0}^{-\frac{t}{2}}\|{u}\|^{p-2}\right)\|u\|-c_2\right]
   \end{align*}
 with $\frac{t}{2}+\frac{p-t}{2^*}=1$ and $\tilde c_1=\frac{\lambda_{h_0}\nu+\lambda}{2}$. As before, we can choose  $h_0$ large enough such that  $f_\lambda\ge\alpha$ in $\partial B_\rho\cap W$ with $\alpha>0$.\\
  \textbf{Step 2:} For every subspace $V\subset H^1(\Omega)$ of finite dimension there exists $R>\rho$ such that $f_\lambda\le 0$ in $B_R^c\cap V$.\\ Let $v\in V\cap L^\infty(\Omega)$ with $\|v\|=1$. From \eqref{g.2} we have
    \[\int_\Omega G(x,tv)\ge t^q\int_\Omega a_0(x)|v|^{q}-K,\ \ a_0\in L^1(\Omega),  \ K>0.\]
    Thus,
    \[f_\lambda(tv)\le \frac{NCt^2}{2}\int_\Omega |\nabla v|^2+\frac{\lambda t^2}{2}\int_\Omega v^2-t^{q}\int_\Omega a_0(x)|v|^q-K\to-\infty,\]
    as $t\to+\infty$. Since $H^1(\Omega)\cap L^\infty(\Omega)$ is dense in $H^1(\Omega)$, $f(tw)\to-\infty$ as $t\to+\infty$ for every $w\in V$ with $\|w\|=1$. Indeed, for each $w\in V$ there exists $\{v_n\}\subset V\cap L^\infty(\Omega)$ with $v_n\to w$ in $H^1(\Omega)$ and 
    \[f_\lambda(tw)=\lim_{n\to+\infty}f_\lambda(tv_n)\le0,\]
    for $t>0$ large enough. \\
    \textbf{Step 3}: 
     $f_\lambda$ satisfies also the $(PS)$-condition.
     
    According to Theorem \ref{CPS bounded} and Corollary \ref{CPS condition}, we have that $f_\lambda$ satisfies the $(CPS)$-condition, which implies that $f_\lambda$ satisfies also the $(PS)$-condition by Proposition \ref{PS e CPS}.

   Thus, Theorem \eqref{MPequi} implies that there exists a sequence $\{u_h\}$ of weak solutions of \eqref{P} such that $f_\lambda(u_h)\to+\infty$. Moreover, $u_h\in H^1(\Omega)\cap L^\infty(\Omega)$ by Theorem \ref{regularity}. 
   
   Finally, assume that $u=c\in\R$ is a constant solution of \eqref{P}. 
  From \eqref{g.2} we have that there exist $k_1,k_2>0$ such that
\[
f_\lambda(c)\le\frac{\lambda}{2}|\Omega|-k_1|c|^q-k_2.
\]
Then the energy of constant functions is bounded and we can choose $h_0\in\N$ such that 
\[
f_\lambda(u_h)>f_\lambda(c),\qquad\text{ for every $h\ge h_0$.}
\]
 Thus, the sequence $\{u_h\}_{h\ge h_0}\subset H^1(\Omega)\cap L^\infty(\Omega)$
is a sequence of non-constant weak solutions of \eqref{P} and the proof is complete.
\end{proof}


\end{document}